\documentclass[a4paper, 11pt, reqno]{amsart}
\usepackage{amsmath,amsthm,amssymb,amscd}
\usepackage[english]{babel}
\usepackage[ansinew]{inputenc}
\usepackage[T1]{fontenc}

\usepackage[all]{xy}
\usepackage{xspace}
\usepackage{mathrsfs}
\usepackage[dvips]{color}
\usepackage{epsfig}
\usepackage{float}
\usepackage{wasysym}
\usepackage{setspace}
\usepackage{overpic}
\usepackage{enumerate}

\usepackage{hyperref}

\pagestyle{headings}



\setlength{\oddsidemargin}{0.5cm}
\setlength{\evensidemargin}{0.5cm}
\textwidth 15cm

\hyphenation{ho-meo-mor-phism}


\def\g{\gamma}
\def\G{\Gamma}
\def\si{\sigma}

\def\vp{\varphi}
\def\ep{\varepsilon}
\def\la{\lambda}

\def\de{\delta}
\def\oo{\mbox{$\Omega$} }

\newcommand{\VV}{{\mathcal V}}
\newcommand{\OO}{{\mathcal O}}

\newcommand{\CC}{{\mathcal C}}

\newcommand{\N}{{\mathbb N}}

\newcommand{\R}{{\mathbb R}}
\newcommand{\C}{{\mathbb C}}

\newcommand{\D}{{\mathbb D}}

\newcommand{\cbar}{{\widehat{\mathbb C}}}

\newcommand{\diam}{\mathrm{diam}}
\newcommand{\mesh}{\operatorname{mesh}}
\newcommand{\emty}{\emptyset}

\newcommand{\crit}{\hbox{\rm crit}}
\newcommand{\post}{\hbox{\rm post}}

\newcommand{\id}{\mathop{\rm id}}

\newcommand{\ben}{\begin{enumerate}}
\newcommand{\een}{\end{enumerate}}

%
\newcommand{\X} {\mathbf{X}}



%

\providecommand{\abs}[1]{\lvert#1\rvert}

\providecommand{\norm}[1]{\lVert#1\rVert}


\numberwithin{figure}{section}

\newtheorem{theorem}{Theorem}[section]
\newtheorem{lemma}[theorem]{Lemma}

\newtheorem{proposition}[theorem]{Proposition}
\newtheorem{corollary}[theorem]{Corollary}
\newtheorem{definition}[theorem]{Definition}

\newenvironment{remark}[1][Remark.]{\begin{trivlist}
\item[\hskip \labelsep \textsc{#1}]}{\end{trivlist}}

\theoremstyle{remark}
\newtheorem{ex}[theorem]{Example}
\newtheorem*{claim*}{Claim}
\newtheorem{rem}[theorem]{Remark}
\makeatletter
\renewcommand\theequation{\thesection.\arabic{equation}}
\@addtoreset{equation}{section}
\makeatother

\makeatletter
\newcommand{\eqnum}{\leavevmode\hfill\refstepcounter{equation}\textup{\tagform@{\theequation}}}
\makeatother

\title{Invariant Jordan curves of Sierpi\'{n}ski carpet rational
maps}
\author{Yan Gao}
\address{Yan Gao, Mathemaitcal School of Sichuan University, 610065, P.R.China.}
\email{gyan@scu.edu.cn}
\author{Peter Ha\"{\i}ssinsky}
\address{Peter Ha\"{\i}ssinsky,
Universit\'e d'Aix-Marseille,
Institut de Math\'ematiques de Marseille (I2M),
39, rue Fr\'ed\'eric Joliot Curie
13453 Marseille Cedex 13
France.}
\email{phaissin@math.univ-toulouse.fr}
\author{Daniel Meyer}
\address{Daniel Meyer, Department of Mathematics and Statistics, P.O.Box 35,
  FI-40014 University of Jyv\"{a}skyl\"{a}, Finland.}
\email{dmeyermail@gmail.com}
\author{Jinsong Zeng}
\address{Jinsong Zeng, Academy of Mathematics and Systems Science, Chinese Academy of Sciences, Beijing 100190 P.R. China.}
\email{zeng.jinsong@amss.ac.cn}

\date{\today}

\begin{document}

\maketitle
\begin{abstract}
 In this paper, we prove that if $R:\cbar\to\cbar$ is a postcritically finite rational map
  with Julia set ho\-meo\-mor\-phic to the Sierpi\'nski
  carpet, then
  there is an integer $n_0$, such that, for any $n\ge n_0$,
  there exists an $R^n$-invariant Jordan curve $\Gamma$
  containing the postcritical set of $R$.

\end{abstract}
\begin{quote}
\footnotesize{\textsc{Keywords}: Invariant Jordan curve, rational maps, Sierpi\'nski carpet Julia sets, expanding Thurston maps.}\end{quote}
\tableofcontents

\section{Introduction}
A central problem in Complex Dynamics is the classification of rational maps of a given degree up to M\"obius
conjugation. A general approach to this problem is to structure this space of maps around the so-called
postcritically finite rational maps. In the case of postcritically finite polynomials, Douady and Hubbard have introduced the
so-called Hubbard trees which capture their dynamical features \cite{DH2}.
A long-standing problem is to develop analogous combinatorial
invariants for general rational maps, cf. \cite[Problem 5.5]{ctm:classification}. The first works on this problem
concerned degree two rational maps \cite{wittner:thesis, bernard:thesis, rees:views}. For postcritically finite Newton methods, there has also been
recent progress  \cite{lms1,lms2}. In general degree, combinatorial tools have been introduced leading to the  construction of
rational maps from subdivision rules \cite{cfkp:fsrrat}; see also \cite{cgnpp} where the authors provide a conjectural picture
of critically fixed rational maps from finite graphs, and \cite{rosetti:thesis} where the author looks for planar graphs characterizing 
postcritically finite rational maps (the present work is used in the latter).


In \cite{bonk:meyer:expanding,  cfp:ratofsr}, the authors prove independently

\begin{theorem}
  \label{thm:R_exp_inv_C}
  Let $R:\cbar\to\cbar$ be a postcritically finite rational map
  with no critical periodic cycle. Then there is an integer
  $n_0$, such that, for any $n\ge n_0$, there exists an
  $R^n$-invariant Jordan curve $\Gamma$ containing the
  postcritical set of $R$.
\end{theorem}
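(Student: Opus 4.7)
\medskip
\noindent\textbf{Proof plan.}

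The plan is to follow the strategy of Bonk--Meyer and Cannon--Floyd--Parry. Since $R$ is postcritically finite with no periodic critical cycle, its Julia set coincides with $\cbar$ and $R$ is an expanding Thurston map; in particular, for any Jordan curve $\mathcal{C} \subset \cbar$ containing $\post(R)$, the mesh of the tile decomposition induced by $R^{-n}(\mathcal{C})$ tends to $0$ as $n \to \infty$. This expansion will drive both the combinatorial construction of an approximate curve and a contraction-type limit argument producing the invariant one.

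First I would fix any Jordan curve $\mathcal{C}_0 \supset \post(R)$; it cuts $\cbar$ into two topological disks and, after pulling back by $R^n$, yields a cell decomposition whose $n$-tiles are Jordan disks of uniformly small diameter for $n$ large. The main step is to find an integer $n_0$ such that, for $n \geq n_0$, there exists a Jordan curve $\mathcal{C}_1 \subset R^{-n}(\mathcal{C}_0)$ with $\post(R) \subset \mathcal{C}_1$ and $\mathcal{C}_1$ isotopic to $\mathcal{C}_0$ rel $\post(R)$. Intuitively, each $0$-edge of $\mathcal{C}_0$ is to be replaced by a concatenation of $n$-edges lying in a narrow tubular neighborhood; when $n$ is large, the mesh is small enough that such neighborhoods can be chosen disjoint, and the replacements glue into a Jordan curve in the desired isotopy class.

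Given $\mathcal{C}_1$, I would bootstrap to invariance by iteration. Choose an ambient isotopy $\Phi_t$ of $\cbar$ from $\id$ to a homeomorphism with $\Phi_1(\mathcal{C}_0) = \mathcal{C}_1$, fixing $\post(R)$ pointwise and supported in a small neighborhood of $\mathcal{C}_0 \cup \mathcal{C}_1$. Lift $\Phi_t$ through $R^n$ to an ambient isotopy $\widetilde{\Phi}_t$ with $\widetilde{\Phi}_0 = \id$ and $R^n \circ \widetilde{\Phi}_t = \Phi_t \circ R^n$, and set $\mathcal{C}_2 := \widetilde{\Phi}_1(\mathcal{C}_1)$. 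Then $\mathcal{C}_2 \subset R^{-2n}(\mathcal{C}_0)$, $R^n(\mathcal{C}_2) = \mathcal{C}_1$, and $\mathcal{C}_2$ is isotopic to $\mathcal{C}_1$ rel $\post(R)$. Iterating produces Jordan curves $\mathcal{C}_k \subset R^{-kn}(\mathcal{C}_0)$ with $R^n(\mathcal{C}_{k+1}) = \mathcal{C}_k$; expansion forces the Hausdorff distance $d_H(\mathcal{C}_k, \mathcal{C}_{k+1})$ to shrink geometrically, so the limit $\Gamma = \lim_k \mathcal{C}_k$ is a Jordan curve containing $\post(R)$ with $R^n(\Gamma) = \Gamma$.

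The main obstacle is the combinatorial existence step: producing the first approximation $\mathcal{C}_1 \subset R^{-n}(\mathcal{C}_0)$ as a Jordan curve through $\post(R)$ in the isotopy class of $\mathcal{C}_0$. One has to verify that the arc visits the postcritical points in the correct cyclic order, does not detour through interior $n$-vertices, and that the local arc-replacements near a postcritical point---where many $n$-edges accumulate and the map can have large local degree---do not introduce crossings. Handling this requires a careful combinatorial analysis of how the $n$-skeleton sits near $\post(R)$, using the mesh estimate to reduce to disjoint local problems; once this hurdle is cleared, the convergence and invariance are essentially formal consequences of expansion.
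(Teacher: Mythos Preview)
Your plan is correct and matches the Bonk--Meyer strategy that the paper reviews in Section~\ref{sec:revi-invar-jord}: produce an approximating curve $\CC_1\subset R^{-n}(\CC_0)$ isotopic to $\CC_0$ rel $\post(R)$ (the paper's Theorem~\ref{thm:ex_C1_in_C} and Proposition~\ref{prop:isotreln}), then iterate lifted isotopies and use expansion to pass to a limit (the content packaged in Theorem~\ref{thm:invC_fromCC1}). The one point you skate over is why the Hausdorff limit $\Gamma$ is again a Jordan curve rather than a pinched continuum---in the paper's formulation this is precisely the ``combinatorially expanding'' hypothesis on $H_1\circ R^n$, which holds automatically once $n$ is large enough that no $n$-tile joins opposite sides of $\CC_1$.
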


Here by $\Gamma$ is \emph{$R^n$-invariant} we mean that it is
\emph{$R^n$-forward invariant}, i.e., $R^n(\Gamma) \subset
\Gamma$.
The curve $\Gamma$ enables us to introduce a Markov partition and to develop a combinatorial description of the dynamics.
Actually, this theorem is proved in the broader setting of {\it expanding Thurston maps}
in \cite{bonk:meyer:expanding}, cf. Theorem \ref{thm:inv_C_expT}. Building on this work, we shall
prove in the present paper

\begin{theorem}
  \label{thm:main}
  Let $R:\cbar\to\cbar$ be a postcritically finite rational map
  with Julia set ho\-meo\-mor\-phic to the Sierpi\'nski
  carpet. Then
  there is an integer $n_0$, such that, for any $n\ge n_0$,
  there exists an $R^n$-invariant Jordan curve $\Gamma$
  containing the postcritical set of $R$.
\end{theorem}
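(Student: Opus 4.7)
The strategy is to reduce to the expanding Thurston map version, Theorem~\ref{thm:inv_C_expT}, of Theorem~\ref{thm:R_exp_inv_C}. Since the Julia set $J$ of $R$ is a Sierpi\'nski carpet, every Fatou component of $R$ is a Jordan domain with pairwise disjoint closures, and each periodic Fatou component is a component of the basin of a super-attracting cycle; it is precisely these critical periodic cycles that prevent Theorem~\ref{thm:R_exp_inv_C} from applying directly to $R$. The plan is to remove them by a topological surgery, invoke Theorem~\ref{thm:inv_C_expT} for the surgered map, and then restore the original dynamics on the super-attracting basins while preserving the invariance of the resulting Jordan curve.

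\emph{Step 1 (Surgery).} We construct a postcritically finite branched cover $\widetilde R:S^2\to S^2$ that agrees with $R$ outside the union of the periodic Fatou components, has no critical periodic cycles, and is therefore an expanding Thurston map. The surgery is local: inside each periodic Fatou component $U$ we replace the restriction of $R$ by a branched cover with the same boundary data but whose critical points are moved off the periodic cycle. The super-attracting periodic points of $R$ are retained as (non-critical) marked points of $\widetilde R$, so that the marked set of $\widetilde R$ still contains $\post(R)$.

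\emph{Step 2 (Invariant curve for $\widetilde R$).} We apply Theorem~\ref{thm:inv_C_expT} to $\widetilde R$ with this enlarged marked set, obtaining an integer $n_0$ such that for every $n\ge n_0$ there exists a $\widetilde R^n$-invariant Jordan curve $\widetilde\Gamma$ containing $\post(R)$.

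\emph{Step 3 (Restoration inside super-attracting basins).} We produce $\Gamma$ by replacing $\widetilde\Gamma\cap\overline U$, for each periodic Fatou component $U$, by an $R^n$-forward-invariant system of arcs inside $U$ passing through the super-attracting periodic point $z_U\in \post(R)$ and joining up with $\widetilde\Gamma\cap\partial U$. Such invariant arcs exist by standard dynamics on super-attracting basins: when $z_U$ is the unique critical orbit of $R$ in $U$, the B\"ottcher coordinate conjugates $R^{kn}|_U$ (with $k$ the period of $U$) to a power map on $\D$, whose radial arcs are forward invariant. Since $R=\widetilde R$ outside the periodic Fatou components, the resulting curve $\Gamma$ is an $R^n$-forward-invariant Jordan curve containing $\post(R)$. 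The main obstacle will be the compatibility between Steps~2 and~3: we must arrange that $\widetilde\Gamma\cap\partial U$ consists of points which can be joined to $z_U$ inside $U$ by $R^n$-invariant arcs. This will likely require a careful choice of the surgery $\widetilde R$ (and perhaps of the marked set) so as to force $\widetilde\Gamma$ to cross each $\partial U$ at the ``correct'' points, together with a possible further enlargement of $n_0$.
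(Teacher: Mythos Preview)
Your surgery approach is genuinely different from the paper's route, which instead \emph{collapses} the closure of every Fatou component to a point via a Moore-type quotient $\pi\colon\cbar\to S^2$, obtains an expanding Thurston map $f$ with $f\circ\pi=\pi\circ R$, applies a strengthened form of Theorem~\ref{thm:inv_C_expT} to $f^n$, and then lifts the resulting curve back through $\pi$, filling in each collapsed Fatou component with a pair of internal (B\"ottcher) rays. However, as written your outline has real gaps.

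First, Step~1 asserts that because $\widetilde R$ has no critical periodic cycle it is an expanding Thurston map. This implication is false: absence of periodic critical points is necessary but not sufficient for expansion in the sense of Definition~\ref{def:f}. At best one knows that $\widetilde R$ is Thurston-equivalent to an expanding map, but passing to an equivalent map destroys the identification $S^2=\cbar$ on which Step~3 relies. Concretely, a naive surgery that merely perturbs the critical point off the super-attracting cycle still leaves an attracting (now non-super) periodic point, so tiles near it do not shrink and $\widetilde R$ is not expanding. You would need to replace the dynamics in each periodic Fatou component by something genuinely expanding while matching $R$ on the boundary circle; this is essentially what the paper's collapse accomplishes, and it is not clear how to do it by a local surgery on $\cbar$.

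Second, even granting an expanding $\widetilde R$ and an invariant curve $\widetilde\Gamma$, Step~3 does not give an $R^n$-invariant curve. You modify $\widetilde\Gamma$ only inside the \emph{periodic} Fatou components, arguing that $R=\widetilde R$ elsewhere. But $\widetilde\Gamma$ will typically cross \emph{preperiodic} Fatou components $U'$ as well. For such $U'$ one has $R^n=\widetilde R^n$ on $U'$, and $R^n(\widetilde\Gamma\cap U')\subset\widetilde\Gamma\cap V$ for some periodic $V$; but in $\Gamma$ you have replaced $\widetilde\Gamma\cap V$ by B\"ottcher rays, so $R^n(\Gamma\cap U')$ need not land in $\Gamma$. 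Thus $R^n(\Gamma)\not\subset\Gamma$ in general. The paper avoids this by treating \emph{all} Fatou components uniformly (they are all collapsed), and its key technical point---the ``sector subdivision'' condition of Theorem~\ref{thm:invC_subdivSector}---is exactly what guarantees that the lifted curve meets each $\partial\Omega$ in two distinct points so that the B\"ottcher-ray crosscut produces a Jordan curve. Your acknowledged compatibility obstacle in Step~3 is the same issue in disguise, and you have not proposed a mechanism to resolve it.
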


The proof actually shows that we may choose $\Gamma$ in any
homotopy class relative to the postcritical points. More
precisely for any Jordan curve $\widetilde{\Gamma}\subset \cbar$
with $\post(f) \subset \widetilde{\Gamma}$ there is $n_0 =
n_0(\widetilde{\Gamma})$ such that for all $n\geq n_0$ there is
an $R^n$-invariant Jordan
curve $\Gamma\subset \cbar$ that
is isotopic to $\widetilde{\Gamma}$ relative to the postcritical
points (thus contains all postcritical points).

\begin{remark} In \cite{rees:invariantgraph}, Rees constructs a Markov partition for geometrically finite rational maps
with Julia sets homeomorphic to the Sierpi\'nski carpet. Her approach is very different.\end{remark}

\medskip

There are many examples of  postcritically finite rational maps with Julia sets homeomorphic to the Sierpi\'nski carpet.
The first example is due to Milnor and Tan Lei \cite[Appendix]{milnor:tanlei}; see also
the survey \cite{devaney:bams13} and the references therein for many others. These rational maps and their Julia sets
play a particular role. These maps are centers of  hyperbolic components. Conjecturally, those components are relatively compact
in the space of rational functions up to M\"obius conjugation \cite[Question 5.3]{ctm:classification} (established in degree $2$ in \cite{aepstein:etds}). 
Moreover, their Julia sets  turn out to be rigid in a very strong sense \cite{bonk:lyubich:merenkov}.

\subsection{Outline}
\label{sec:outline}

The proof of Theorem~\ref{thm:main} proceeds as follows. Let
$R\colon \cbar \to \cbar$ be a postcritically finite rational
map with Sierpi\'{n}ski carpet Julia set. Collapse the closure
of each Fatou component. This yields an expanding Thurston map
$f\colon S^2\to S^2$ in the sense of
\cite{bonk:meyer:expanding} by Theorem \ref{thm:quotient}. Using
Theorem~\ref{thm:inv_C_expT}, or rather the appropriate
version
Theorem~\ref{thm:invC_subdivSector}, we obtain a Jordan curve $\CC$
that is invariant for any sufficiently high iterate $f^n$ and whose lift  to $\cbar$ contains an $R^n$-invariant Jordan
curve, see Theorem~\ref{thm:lifts} and its proof.

\subsection{Notation}
\label{sec:notation}

The $2$-sphere is denoted by $S^2$, the Riemann sphere by
$\cbar$ and the unit disk by $\D$. The set of critical points of a branched covering map
$f$ is denoted by $\crit(f)$, the set of postcritical points by
$\post(f)$ (see Section~\ref{sec:branched-coverings}). The Julia
set of a rational map $R$ will be denoted by $\mathcal{J}_R$; the Fatou
set is $\mathcal{F}_R$. Given two positive quantities $a$ and $b$, we will
write $a\lesssim b$ or $b\gtrsim a$
if there is a constant $C>0$
such that $a\le Cb$. The $\delta$-neighborhood of a set $A$ is
denoted by $\mathcal{N}^\delta(A)$.

\section{Branched covers, Thurston and rational maps}
\label{sec:thurston-maps}

Here we present some elementary background that will be used in
this paper. More details can be found in
\cite{bonk:meyer:expanding} and \cite{HaiPil}.

\subsection{Branched coverings}
\label{sec:branched-coverings}

A  map $f:S^2\to S^2$ is a  {\it branched covering} (of the
sphere $S^2$) if there are orientation-preserving homeomorphisms $\vp,\psi:S^2\to\cbar$ and
a rational map $R:\cbar\to\cbar$ such that $f= \psi^{-1}\circ R\circ\vp$. It is characterized
by being finite-to-one, open, and orientation-preserving cf.
\cite[Theorem~X.5.1]{whyburn:analytic_topology}.
The \emph{degree} $\deg f$ and
\emph{local degree at $x$} $\deg(f ,x)$ of $f$ are defined as the degree of $R$ and the local degree of $R$ at $\vp(x)$: $\deg (f, x)=d$
means there are charts in the neighborhoods of $x$ and $f(x)$ so that $f$ takes the form $z\mapsto z^d$.

The $n$-th iterate of $f$ will be denoted by $f^n$.
The  set of {\it critical points} $\crit(f)$ corresponds to the points  $c\in S^2$ such that  $\deg(f,c)>1$, i.e., $R'(\vp(c))=0$
(in a suitable chart).
The postcritical set $\post(f)$ is defined as
\begin{equation*}
  \post(f)=\{f^n(c) : {c\in\crit(f)}, {n\ge 1} \}.
\end{equation*}
Note that $f(\post(f))\subset \post(f)$ and $\post(f)=\post(f^n)$ for any $n\ge 1$.
A finite branched covering $f$ is {\it postcritically finite} if $\post(f)$ is a finite set.

If $f$ is postcritically finite, then every postcritical point
$p\in \post(f)$ is preperiodic, i.e., there are  minimal iterates
$k\geq 0$ and $m\geq 1$ such that $f^{k+m}(p)=f^k(p)$;
 $m$ is the \emph{period} of the
cycle $\{f^k(x),\ldots, f^{k+m-1}(x)\}$. If the  cycle does not contain a critical point,
we say that $p$ is of \emph{Julia-type}, and we denote by $ \post_{\mathcal{J}}(f)$
the set of Julia-type postcritical points. Otherwise,
 the periodic cycle
contains a critical point  and we say $p$ is of
\emph{Fatou-type}; let   $\post_{\mathcal{F}}(f)$ denote the set
of postcritical points of Fatou-type.
Note that $p\in \post_{\mathcal{F}}(f)$ if and only if
$\deg(f^n,p)\to \infty$ as $n\to \infty$. 
The terminology is of course explained by the
well-known fact that if  $f$ is a rational map then a
postcritical point $p$ is of Julia-type if and only if $p$ is
contained in the Julia set and is of Fatou-type if and only if
$p$ is contained in the Fatou set of $f$.

We mention the following fact for future reference, which is a consequence of the definitions
and of the Riemann-Hurwitz formula.

\begin{lemma}
  \label{lem:pre_1post}
  Let $f\colon S^2\to S^2$ be a finite branched covering and $V\subset S^2$
  be a Jordan domain that contains at most a single
critical value $p$. Then every component $U$ of
  $f^{-1}(V)$ is a Jordan domain and $f:U\to V$ is equivalent to $z^d$, where $d= \deg(f,c)$ and $c$ is the unique
  preimage of $p$ in $U$, i.e., there are orientation-preserving homeomorphisms $\varphi:U\to \D$ and $\psi:V\to\D$
such that $\varphi(c)=\psi(f(c))=0$ and $\psi\circ f\circ\varphi^{-1}:\D\to\D$ is
  $z\mapsto z^d$.
\end{lemma}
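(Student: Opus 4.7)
The strategy is to strip off the critical value $p$, identify the resulting unramified map as a covering of a punctured disk, and then glue $p$ back in.

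First, I set $V^* := V \setminus \{p\}$, which is homeomorphic to $\D^*$. Since $V^*$ contains no critical values of $f$, the restriction $f\colon f^{-1}(V^*) \to V^*$ is a proper local homeomorphism, hence a finite covering map. Because $\pi_1(V^*)\cong \Z$, every connected finite cover of $V^*$ is conjugate to $z\mapsto z^d\colon \D^*\to\D^*$ for some integer $d\ge 1$. Given a component $U$ of $f^{-1}(V)$, I set $U^* := U \cap f^{-1}(V^*) = U \setminus f^{-1}(p)$: since $U\cap f^{-1}(p)$ is finite and $U$ is a connected open subset of $S^2$, $U^*$ is connected; and any component of $f^{-1}(V^*)$ meeting $U$ is contained in $U$, so $U^*$ is itself a single component of $f^{-1}(V^*)$. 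Hence $U^*\cong\D^*$ and $f|_{U^*}$ is conjugate to $z\mapsto z^d$ with $d=\deg(f|_{U^*})$.

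Next I count the ends of $U^*$: as $U^*\cong\D^*$ has exactly two ends, and under the model $z\mapsto z^d$ one end maps to $\partial V$ while the other maps to the puncture $p$, each point $c\in U\cap f^{-1}(p)$ supplies exactly one interior puncture end of $U^*$ that maps to $p$. Hence there is a unique such preimage $c$ in $U$, with local degree $\deg(f,c)=d$. Adjoining $c$ to $U^*$ and $p$ to $V^*$ extends the conjugation to homeomorphisms $\varphi\colon U\to\D$ and $\psi\colon V\to\D$ with $\varphi(c)=\psi(p)=0$ and $\psi\circ f\circ\varphi^{-1}(z)=z^d$. The fact that $\partial U$ is a Jordan curve (so that $U$ is genuinely a Jordan domain and not merely a topological disk) then follows by extending the conjugation across the outer end, using properness of $f|_U\colon U\to V$ together with the Jordan curve structure of $\partial V$.

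The main technical step is the end-counting argument: one must verify, via the local normal form of $f$ at a critical point, that each preimage of $p$ in $U$ produces an \emph{interior} puncture end of $U^*\cong\D^*$, forcing uniqueness. The hypothesis that $V$ contains at most one critical value is precisely what allows one to invoke the structure theorem for connected finite covers of the punctured disk.
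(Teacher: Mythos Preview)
Your argument is correct. The paper does not actually prove this lemma: it only remarks that it ``is a consequence of the definitions and of the Riemann--Hurwitz formula.'' The intended route is presumably to note that $f|_U\colon U\to V$ is proper of some degree $d$, so Riemann--Hurwitz gives $\chi(U)=d\cdot\chi(V)-\sum_{c\in U}(\deg(f,c)-1)=d-(d-k)=k$ where $k=\#(U\cap f^{-1}(p))$; then $\chi(U)\le 1$ forces $k\le 1$, and $k=0$ is excluded since an unramified cover of a disk is a homeomorphism, so $k=1$ and $U$ is a disk. You instead puncture at $p$, classify the connected finite covers of $V^*\cong\D^*$ directly, and read off both the topology of $U$ and the normal form $z\mapsto z^d$ in one stroke, with the end-count replacing the Euler-characteristic computation. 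The two arguments are close relatives, but yours is the more constructive: it produces the conjugating homeomorphisms $\varphi,\psi$ immediately, whereas Riemann--Hurwitz only yields $U\cong\D$ and leaves the normal form to be built afterwards. One minor point worth tightening: your final sentence on why $\partial U$ is an honest Jordan curve (rather than $U$ merely being a topological disk) is only a sketch; the clean version uses that $\partial V$ avoids critical values so that $f$ is a local homeomorphism along $\partial U$, an assumption that is implicit in the paper's applications though not literally in the stated hypothesis. The paper's one-line appeal to Riemann--Hurwitz does not address this either.
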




\subsection{Thurston maps}
\label{sec:thurston-maps-1}

The branched covers of the sphere that we will be considering
will not always be rational. In this situation, the following
notion is very useful.

\begin{definition}\label{def:f}
  A \emph{Thurston map} is an orientation-preserving, postcritically
  finite, branched covering of the sphere,
  \begin{equation*}
    f\colon S^2\to S^2.
  \end{equation*}
We fix a base metric $\sigma$ on $S^2$
that induces the standard
topology on $S^2$. Consider a Jordan curve $\CC\supset \post f$. The Thurston map
  $f$ is
  called \emph{expanding} if
    \begin{equation*}   \label{def:fexpanding}
      \mesh f^{-n}(\CC) \to 0 \text{ as } n\to \infty.
    \end{equation*}
where  $\mesh f^{-n}(\CC)$ denotes the maximal diameter of a
  component of $S^2\setminus f^{-n}(\CC)$.
\end{definition}
 In
  \cite[Lemma~6.1]{bonk:meyer:expanding}{} it
  was shown that this definition is independent of the chosen curve
  $\CC$. This notion of ``expansion'' agrees with the one by
  Ha\"{i}ssinsky-Pilgrim in \cite{HaiPil} (see
  \cite[Proposition~6.3]{bonk:meyer:expanding}).



\medskip

Let $f:S^2\to S^2$ be a Thurston map
and fix a Jordan curve $\CC\subset S^2$ with $\post(f) \subset \CC$.
The closure of one of the two components of $S^2\setminus \CC$
is called a \emph{$0$-tile}. Similarly, we call the closure
of one component of $S^2\setminus f^{-n}(\CC)$ an
$n$\emph{-tile} (for any $n\in \N_0$). The set of all $n$-tiles
is denoted by
$\X^n(\CC)$. For any $n$-tile $X$, the set $f^n(X)=X^0$ is a $0$-tile
and
\begin{equation}
  \label{eq:fnXntoXhomeo}
  f^n\colon X\to X^0\quad \text{is a homeomorphism,}
\end{equation}
see \cite[Proposition~5.17]{bonk:meyer:expanding}.  This means
in particular that each $n$-tile is a closed Jordan domain. The
definition of ``expansion'' implies that $n$-tiles become
arbitrarily small, this is the (only) reason we require
expansion.

We call the points in $f^{-n}(\post(f))$ the
\emph{$n$-vertices}, so the postcritical points are exactly the $0$-vertices. The closure of any component of
$f^{-n}(\CC)\setminus f^{-n}(\post(f))$ is an
\emph{$n$-edge}. Thus the $0$-edges are precisely the closed
arcs into which the points $\post(f) \subset \CC$ divide $\CC$.

\smallskip
The $n$-tiles, $n$-edges, $n$-vertices form a \emph{cell complex} when
viewed as $2$-, $1$-, and $0$-cells (see
\cite[Chapter~5]{bonk:meyer:expanding}).

If we consider just the $n$-edges and $n$-vertices, we obtain a
$1$-dimensional cell complex, i.e., a \emph{graph} in the natural
way. Note that this graph may have multiple edges, but no loop
edges.

An
\emph{$n$-edge path} is a path in this graph, meaning that it is
a finite sequence $e_1, \dots, e_N$ of $n$-edges, where $e_j\cap
e_{j+1}$ is an $n$-vertex
for all $j=1,\dots,N-1$.
Such an $n$-edge path is \emph{simple} if furthermore $e_i \cap
e_j= \emptyset$ for $\abs{i-j}\geq 2$. We allow an
$n$-path to consist of only a single $n$-vertex, which is then
simple.

\subsection{Fatou dynamics}
\label{bottcher}
Let us now assume again that $R\colon \cbar \to \cbar$ is a
postcritically finite rational map. Then each Fatou component
$\Omega$ of $R$ is simply connected, cf.  \cite[Remark, p.\,35]{McM2}).
From \emph{Boettcher's theorem} it then follows that there is a conformal map $\eta_{\Omega}:\D\to \oo$ and some power $d_{\Omega}$
such that $R\circ \eta_{\Omega}(z)=
\eta_{R(\Omega)}(z^{d_{\Omega}})$ for all $z\in\D$.

Furthermore, the Julia set $\mathcal{J}_R$ of $R$ is locally
connected, see \cite[Theorem~19.7]{milnor:dynamics}.
Thus it follows from Carath\'eodory's theorem that the
conformal map $\eta$ extends to a continuous and surjective map
$\eta_{\Omega}:\overline{\D}\to\overline{\Omega}$.

An {\it internal ray} is the image $\eta_{\Omega}([0,1)\si)$ for
some Fatou component $\Omega$ and some complex number of modulus one $\si\in\partial\D$.
Note that internal rays are mapped to internal rays under $R$.

\subsection{Orbifold metric}
\label{sec:orbifold-metric}
Again let $R\colon \cbar \to \cbar$ be a postcritically finite
rational map (or rational Thurston map).  For any such map $R$,
there exists  a complete metric $d_{\OO}$
 called the {\it  orbifold metric} on $\OO= \cbar\setminus \post_{\mathcal{F}}(R)$,  the complement of the postcritical points
of Fatou-type, with
the following properties (see \cite[Appendix~E]{milnor:dynamics} as well as \cite[Appendix~A.9]{bonk:meyer:expanding}). It is induced by a conformal  metric  $\rho(z)|dz|$ with $\rho$ smooth in the complement
of $\post(R)$, and, for any $w\in \OO$ and $z\in R^{-1}(w)$,
it satisfies $\norm{R'(z)}_{\OO}>1$. Thus for any compact set
$K\subset \OO$ there is a constant $\la>1$ such that
$\|R'(z)\|_{\OO}\ge \la$ for all $z\in R^{-1}(K)$. See
\cite[Theorem 19.6]{milnor:dynamics} for details.

From the previous section it is easy to see that we may choose
such a relatively compact set $W\subset \OO$ such that $W'=R^{-1}(W)$ has  compact closure
in $W$ and such that $W$ contains all Fatou components
that do not contain a Fatou-type postcritical point. Let us then
fix the constant $\lambda>1$ for this set $W$ as above.

\begin{lemma}[Shrinking Lemma]
  \label{lem:shrink}
  Let $R\colon \cbar \to \cbar$ be a postcritically finite
  rational map. There are constants $\delta>0$ and $C>0$
  such that the following holds. Let  
   $K\subset \OO$ be a compact set intersecting $\mathcal{J}_R$ 
  and diameter at most $\de$, then for any iterate $n\ge 1$ and
  any component $L$ of $R^{-n}(K)$, 
  the inequality 
  \begin{equation*}
    \diam_\OO (L)\le C \diam_\OO (K)/\la^n
  \end{equation*}
  holds. 

\end{lemma}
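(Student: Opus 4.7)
The proof combines two features of the orbifold metric: the uniform expansion $\|R'\|_\OO \ge \lambda$ on $W'$ and the trapping property $R^{-1}(W) = W' \Subset W$. The plan is to place $K$ inside a Jordan domain $V\subset W$ that contains at most one critical value of $R^n$, apply Lemma~\ref{lem:pre_1post} to produce a Jordan-disk preimage $\widetilde V$ containing $L$ with model $z\mapsto z^d$, and use the expansion to contract diameters by the factor $\lambda^{n}$ through backwards path-lifting.

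\textbf{Steps.}
Since $\mathcal{J}_R$ is a compact subset of $\OO$ and $W$ is an open neighborhood of $\mathcal{J}_R$, I would first fix $\delta>0$ smaller than both the pairwise $d_\OO$-distances between the Julia-type postcritical points of $R$ and the distance from $\mathcal{J}_R$ to $\partial W$. Then any $K$ satisfying the hypothesis lies in a Jordan domain $V\subset W$ of $d_\OO$-diameter comparable to $\delta$ and containing at most one postcritical point of $R$; since the critical values of every iterate $R^n$ are contained in $\post(R)$, this means $V$ contains at most one critical value of $R^n$. A straightforward induction from $R^{-1}(W)=W'\subset W$ shows $R^{-k}(K)\subset W'$ for all $k\ge 1$, so $L$ together with $R(L), R^{2}(L), \dots, R^{n-1}(L)$ is contained in $W'$. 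By Lemma~\ref{lem:pre_1post} applied to $R^n$, the component $\widetilde V$ of $R^{-n}(V)$ containing $L$ is a Jordan disk and $R^n\colon\widetilde V\to V$ is conjugate to $z\mapsto z^d$ for some $d\le\deg(R^n)$. For any $x,y\in L$, I would connect $R^n(x)$ to $R^n(y)$ by a path $\beta$ inside $V$ of orbifold length $\le C_{1}\diam_\OO(K)$; by adjoining at most $d$ small loops around the critical value of $R^n$ in $V$, I can modify $\beta$ so that its backwards lift starting at $x$ ends exactly at $y$, while the length stays bounded by a fixed multiple of $\diam_\OO(K)$. Since $\|R'\|_\OO\ge\lambda$ throughout $W'$, the lifted path in $L$ has length $\le C\diam_\OO(K)/\lambda^{n}$, and taking the supremum over $x,y$ yields $\diam_\OO(L)\le C\diam_\OO(K)/\lambda^{n}$.

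\textbf{Main obstacle.}
The principal technical issue is resolving the monodromy of the branched covering $R^n\colon\widetilde V\to V$: a direct backwards lift of a geodesic from $R^n(x)$ to $R^n(y)$ ends at an arbitrary preimage of $R^n(y)$ in $\widetilde V$, not necessarily $y$ itself. The explicit $z\mapsto z^d$ model supplied by Lemma~\ref{lem:pre_1post} makes this monodromy cyclic of order $d$, so adjoining a bounded number of loops around the critical value shifts the endpoint through the desired preimage while increasing the length of $\beta$ by only $O(\diam_\OO(V))$. The orbifold metric is engineered so that the bound $\|R'\|_\OO\ge\lambda$ holds even at critical points, so the expansion factor $\lambda^{n}$ is preserved throughout the lift and only the bounded-geometry constant $C$, which compares intrinsic and extrinsic $d_\OO$-distances inside $V$, is absorbed into the statement.
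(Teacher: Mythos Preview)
Your strategy matches the paper's in outline---enclose $K$ in a Jordan domain meeting $\post(R)$ in at most one point, invoke Lemma~\ref{lem:pre_1post}, and pull back paths using $\|R'\|_\OO\ge\lambda$---but there is a real gap in the execution. You take $V$ of diameter comparable to the fixed threshold $\delta$, not to $\diam_\OO K$. Your own last paragraph then concedes that the monodromy-correcting loops add length $O(\diam_\OO V)=O(\delta)$, contradicting the earlier claim that the modified $\beta$ has length $O(\diam_\OO K)$. When $\diam_\OO K\ll\delta$ and the postcritical point $p\in V$ sits at distance of order $\delta$ from $K$, the detour needed to wind around $p$ genuinely costs of order $\delta$, so what you actually prove is only $\diam_\OO L\lesssim \delta/\lambda^n$, not the stated $\diam_\OO L\le C\,\diam_\OO K/\lambda^n$. (You also assert but do not justify that the degree $d$ is bounded uniformly in $n$; this holds because the postcritical point in $V$, if any, is of Julia type and hence has finite orbifold weight, but it needs to be said.)

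The paper repairs both issues at once. It lets the Jordan domain $D$ scale with $K$, choosing $K\subset D\subset B_\OO(z,\chi^2\diam_\OO K)$ for a fixed constant $\chi$, so every auxiliary path has length $O(\diam_\OO K)$. More to the point, it sidesteps the monodromy entirely: the center $z$ is taken to be the postcritical point itself when one is present, and since $z$ is then the sole branch value of $R^n$ on $D$, it has a \emph{unique} preimage $z'$ in each component $D'$. A path from $R^n(x)$ to $z$ therefore lifts uniquely to a path from $x$ to $z'$, giving $d_\OO(x,z')\le d_\OO(R^n(x),z)/\lambda^n$ directly, and $\diam_\OO L\le 2\chi\,\diam_\OO K/\lambda^n$ follows by the triangle inequality through $z'$. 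No loop-counting, and no need to bound $d$.
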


\begin{proof}
Since $d_\OO$ comes from a conformal metric, we may find $\chi\ge 1$ and $r_0>0$ such that, for any $z\in W$, any $r\in (0,r_0]$, we can
find a Jordan domain $D$ such that $B_\OO(z,r)\subset D\subset B_\OO(z,\chi r)$. Let $\de\in (0,r_0]$ be small enough so that,
for any distinct postcritical points $z,w$ of Julia type, $d_\OO(z,w)> \chi^2\de$ and $d_\OO(\mathcal{J}_R,\OO\setminus W)> \chi^2 \de$.
If $K$ is a compact subset with connected complement  of diameter at most $\de$, then either (a) we can find
$z\in K$ and a simply connected domain $D$ such that  $B_\OO(z,\diam_\OO K)\subset D\subset B_\OO(z,\chi \diam_\OO K)$ and
$B_\OO(z,\chi \diam_\OO K)$ is disjoint from $\post(R)$, or (b) we can
find $z\in \post_{\mathcal{J}}(R)$ and a simply connected set $D$ such that $K\subset B_{\OO}(z, \chi\diam_\OO K)\subset D\subset  B_{\OO}(z, \chi^2\diam_\OO K)$ and
$ B_{\OO}(z, \chi\diam_\OO K)\cap \post (R)=\{z\}$.
Fix $n\ge 1$; according to Lemma \ref{lem:pre_1post}, for  any component $D'$ of $R^{-n}(D)$, $R^{-n}(\{z\})\cap D'$ is a singleton $\{z'\}$
and, for any $x\in D'$, $d_\OO(x,z')\le d_\OO(R^n(x),z)/\la^n$.  Therefore, if $L$ is a component of $R^{-n}(K)$, then
$\diam_\OO L\le2  \diam_\OO K/\la^n$ holds in case (a),  and, in case (b), we obtain $\diam_\OO L\le 2\chi \diam_\OO K/\la^n$.
\end{proof}

\subsection{Sierpi\'{n}ski carpet Julia sets}
\label{sec:sierp-carp-julia}

Let us now assume additionally that the Julia set $\mathcal{J}_R$ of $R$ is
a Sierpi\'{n}ski carpet. This means by definition that $\mathcal{J}_R$ is
homeomorphic to the standard Sierpi\'{n}ski carpet. By
\emph{Whyburn's characterization}, a set $S\subset \cbar$ is a
Sierpi\'nski carpet if and only if it is compact, connected,
locally connected, has no local cut-points, and has topological
dimension $1$, see \cite{whyburn:sierpinski}. This implies that
each component of $\cbar\setminus S$ is a Jordan domain,
distinct such complementary components have disjoint closures.
Thus our Julia set $\mathcal{J}_R$ has these properties.

This means that each component of the Fatou set is a Jordan
domain and that distinct components of the Fatou set have
disjoint 
closures. Furthermore, the boundary of a component of the  Fatou
set cannot contain a critical value $v$, since any critical
point $c$ with $R(c)=v$ would be a local cutpoint.
The  local connectedness of a continuum implies that it is an \emph{$E$-continuum}\label{Econt}
 (for any $\ep>0$, only finitely many Fatou components have
diameter larger than $\ep$); see \cite[Theorem
 VI.4.4]{whyburn:analytic_topology}.  We record this fact as a lemma for future reference.

\begin{lemma}
  \label{lem:comp_F_small}
  Let $\epsilon>0$ be arbitrary. Then there are only finitely
  many components $\Omega$ of $\mathcal{F}_R$ with
  $\diam(\Omega)\geq \epsilon$.
\end{lemma}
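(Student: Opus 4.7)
The plan is to deduce the lemma directly from the fact that $\mathcal{J}_R$ is a locally connected continuum, via Whyburn's theorem on $E$-continua (already invoked in the preceding paragraph of the excerpt).

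First I would recall what has already been established: since $\mathcal{J}_R$ is a Sierpi\'nski carpet, it is compact, connected and locally connected, hence a Peano continuum in $\cbar$. Its complement $\cbar\setminus\mathcal{J}_R=\mathcal{F}_R$ decomposes into components, each of which is a Fatou component $\Omega$; moreover, by the Whyburn characterization mentioned above, these components are Jordan domains with pairwise disjoint closures.

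Then I would simply invoke \cite[Theorem VI.4.4]{whyburn:analytic_topology}: a Peano continuum $K\subset S^2$ is an $E$-continuum, meaning that for every $\epsilon>0$ only finitely many components of $S^2\setminus K$ have diameter at least $\epsilon$. Applied to $K=\mathcal{J}_R$ this gives exactly the claim, since the complementary components are the Fatou components $\Omega$.

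There is no real obstacle here; the lemma is essentially a restatement of the $E$-continuum property for $\mathcal{J}_R$. If one preferred a self-contained argument avoiding the direct citation, one could reason by contradiction: assuming infinitely many components $\Omega_n\subset\mathcal{F}_R$ with $\diam(\Omega_n)\geq\epsilon$, pick $x_n,y_n\in\Omega_n$ with $\sigma(x_n,y_n)\geq\epsilon/2$ and pass (by compactness of $\cbar$) to limits $x_n\to x$, $y_n\to y$ with $\sigma(x,y)\geq\epsilon/2$; since the $\overline{\Omega_n}$ are pairwise disjoint, both $x$ and $y$ must lie in $\mathcal{J}_R$, and local connectedness of $\mathcal{J}_R$ at $x$ would force, for all sufficiently large $n$, the pair $x_n,y_n$ to be separated by a small connected neighborhood in $\mathcal{J}_R$, contradicting the fact that they lie in the connected set $\overline{\Omega_n}\subset\cbar$ of diameter $\geq\epsilon$. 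In either form the argument is short, and the content is entirely topological, relying on no complex-dynamical input beyond the local connectedness of $\mathcal{J}_R$ that has already been recorded.
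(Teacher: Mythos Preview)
Your proposal is correct and takes essentially the same approach as the paper: both simply invoke \cite[Theorem~VI.4.4]{whyburn:analytic_topology} to conclude that the locally connected continuum $\mathcal{J}_R$ is an $E$-continuum, which is exactly the content of the lemma. The paper in fact states this right before the lemma and does not give a separate proof; your write-up merely makes the citation explicit, and the optional self-contained sketch you add is extra.
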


\section{Review of invariant Jordan curves}
\label{sec:revi-invar-jord}

Recall from the introduction that in \cite{bonk:meyer:expanding}
the following result is given:
\begin{theorem}[{\cite[Theorem~14.1]{bonk:meyer:expanding}}]
  \label{thm:inv_C_expT}
  Let $f\colon S^2\to S^2$ be an expanding Thurston map. Then
  for each sufficiently large $n\in \N$ there is a Jordan curve
  $\CC\subset S^2$ with $\post(f)$ that is invariant for the
  iterate $f^n$ of $f$. 
\end{theorem}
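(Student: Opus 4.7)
The plan is to construct $\CC$ as the Hausdorff limit of an inductively defined sequence of Jordan curves $(\CC_k)_{k\ge 0}$ with $\post(f)\subset \CC_k\subset f^{-kn}(\CC_0)$, obtained by iteratively lifting ambient isotopies through $f^n$. The whole argument rests on a single ``seed step'': for $n$ large enough there exists a Jordan curve $\CC_1\subset f^{-n}(\CC_0)$ isotopic to a chosen Jordan curve $\CC_0\supset \post(f)$ relative to $\post(f)$.

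For the seed, we start with any Jordan curve $\CC_0\supset \post(f)$ and enumerate its $0$-edges $e_1,\ldots,e_m$. Around each $e_i$ we choose a thin Jordan neighborhood $U_i$ containing no postcritical points other than the endpoints of $e_i$, arranged so that distinct $U_i$ meet only at those common endpoints. By expansion, $\mesh f^{-n}(\CC_0)\to 0$, so for $n$ large every $n$-tile meeting $e_i$ lies inside $U_i$. The cell complex $\X^n(\CC_0)$ then furnishes inside $U_i$ a natural chain of $n$-edges running from one endpoint of $e_i$ to the other, from which we extract a simple $n$-edge path $\gamma_i\subset U_i$ with those endpoints. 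Concatenating the $\gamma_i$ cyclically produces a Jordan curve $\CC_1\subset f^{-n}(\CC_0)$ isotopic to $\CC_0$ rel $\post(f)$, since each $\gamma_i$ is homotopic to $e_i$ inside the simply connected $U_i$.

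For the iteration, extend an isotopy from $\CC_0$ to $\CC_1$ to an ambient isotopy $H_t$ of $S^2$ with $H_0=\id$ fixing $\post(f)$ pointwise. Since $\post(f)$ contains the critical values of $f^n$ and $H_0=\id$, the isotopy $H_t$ lifts uniquely through $f^n$ to an ambient isotopy $\widetilde H_t$ with $\widetilde H_0=\id$ and $f^n\circ \widetilde H_t=H_t\circ f^n$. Setting $\CC_2:=\widetilde H_1(\CC_1)$ gives $f^n(\CC_2)=H_1(f^n(\CC_1))\subset H_1(\CC_0)=\CC_1$, so $\CC_2\subset f^{-n}(\CC_1)$. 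Iterating yields Jordan curves $\CC_k=\phi_k(\CC_{k-1})$ for homeomorphisms $\phi_k$ whose tracks are $f^n$-preimages of the previous tracks; by expansion each such track is contained in a single $kn$-tile, so the tracks shrink geometrically. Hence the compositions $\Psi_k:=\phi_k\circ\cdots\circ\phi_1$ converge uniformly to a continuous map $\Psi\colon S^2\to S^2$, and $\CC_k$ converges in Hausdorff distance to $\CC:=\Psi(\CC_0)$. Passing to the limit in $f^n(\CC_k)\subset \CC_{k-1}$ gives $f^n(\CC)\subset \CC$.

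The main obstacle is the seed step: producing the $\gamma_i$ requires a careful combinatorial analysis of the $n$-graph $f^{-n}(\CC_0)$ near the postcritical points, where $f^n$ can be highly ramified and the local structure of $n$-edges is delicate, and one must ensure that the $\gamma_i$ match end-to-end into a simple cycle rather than merely a closed curve with self-intersections at postcritical points. A secondary technical point is showing that the limit $\Psi$ restricted to $\CC_0$ remains injective, so that $\CC$ is genuinely a Jordan curve rather than a circle with arcs collapsed; this uses that the parameterizations of the $\CC_k$ by $kn$-edges refine consistently.
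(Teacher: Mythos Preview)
Your approach is essentially the one the paper outlines (which is Bonk--Meyer's original proof): your seed step is the content of Theorem~\ref{thm:ex_C1_in_C} together with Proposition~\ref{prop:isotreln}(1), and your iteration-and-limit is precisely what the paper packages as the black box Theorem~\ref{thm:invC_fromCC1}. Two places deserve more care. First, the assertion that each track of $\phi_k$ lies in a single $kn$-tile is not literally correct; what one actually arranges is that $H$ has tracks contained in $n$-\emph{flowers} (unions of $n$-tiles around an $n$-vertex), so that the lifted tracks at stage $k$ lie in $kn$-flowers, and those shrink by expansion. Second, the injectivity of $\Psi|_{\CC_0}$, which you call a ``secondary technical point,'' is in fact the heart of the matter: this is exactly where the paper's explicit hypothesis of \emph{combinatorial expansion} for $\widehat{f}=H_1\circ f^n$ enters, and Proposition~\ref{prop:isotreln}(2) is what secures it once $n$ is large enough that $n$-tiles do not join opposite sides of $\CC'$. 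Without isolating that condition, nothing in your sketch prevents the limit from collapsing an arc of $\CC_0$ to a point.
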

Recall that this means that
  \begin{equation*}
    f^n(\CC) \subset \CC.
  \end{equation*}

 We will need a variant of this
theorem. More precisely, we will need to adjust the proof in one
specific place. Here an outline of the proof will be given, in
the next section we will prove a slightly stronger version of
this theorem. 

\smallskip
A combinatorial condition that will allow us to construct the
invariant curve is required.
\begin{definition}
  Let  $\textup{post}(f)\subset\CC\subset
  S^2$ be a Jordan curve. A set $K\subset S^2$
  \emph{joins opposite sides} of $\CC$ if
  \begin{itemize}
  \item $K$ intersects disjoint $0$-edges of $\CC$ in the case
    when $\#\post(f) \geq 4$;
  \item $K$ intersects all three $0$-edges of $\CC$ in the case
    when $\#\post(f) =3$.
  \end{itemize}
\end{definition}

Recall that the $0$-edges of $\CC$ are the closed arcs into
which the postcritical points subdivide $\CC$.

\begin{definition}
  Let $f\colon S^2\to S^2$ be a Thurston map and $\CC\subset
  S^2$ be an $f$-invariant Jordan curve with
  $\post(f)\subset\CC$. Then $f$ is \emph{combinatorially
    expanding} for $\CC$ if there is an $n\in \N$, such that no
  $n$-tile $X$ (defined in terms of $(f,\CC)$) joins opposite
  sides of $\CC$.
\end{definition}

If a Thurston map $f$ is expanding, the diameter of $n$-tiles
goes to $0$ as $n\to \infty$. Thus, an expanding Thurston map is
always combinatorially expanding for any $f$-invariant Jordan
curve $\CC$.

The following theorem
(which is \cite[Theorem~14.4 and
Corollary~14.14]{bonk:meyer:expanding})
is the main step in proving Theorem~\ref{thm:inv_C_expT}.

\begin{theorem}
  \label{thm:invC_fromCC1}
  Let $f\colon S^2\to S^2$ be an expanding Thurston map. Assume
  there exist Jordan curves $\CC, \CC'\subset S^2$ with
  $\post(f) \subset \CC,\CC'$ and $\CC'\subset f^{-1}(\CC)$, and
  an isotopy $H\colon S^2\times [0,1]\to S^2$ rel.\ $\post(f)$
  with $H_0=\id_{S^2}$ and $H_1(\CC)= \CC'$ such that the map
  \begin{equation*}
    \widehat{f}:= H_1 \circ f \text{ is combinatorially
      expanding for } \CC'.
  \end{equation*}
  Then there exist an $f$-invariant Jordan curve
  $\widetilde{\CC}\subset S^2$ with $\post(f) \subset
  \widetilde{\CC}$.

  Furthermore there is a homeomorphism $h\colon S^2\to S^2$
  satisfying the following:
  \begin{align*}
    &h(\post(f))= \post(f),
    \\
    &h(\widetilde{\CC}) = \CC',
    \\
    &\text{$h$ maps the $1$-tiles, $1$-edges, and
      $1$-vertices  for $(f,\widetilde{\CC})$ to the}
    \\
    &\text{\phantom{$\varphi$ maps the }$1$-tiles, $1$-edges,
      and $1$-vertices for $(f, \CC)$.}
  \end{align*}
\end{theorem}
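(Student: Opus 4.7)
The approach is to obtain $\widetilde{\CC}$ as $g(\CC')$, where $g\colon S^2\to S^2$ is a homeomorphism intertwining $\widehat{f}$ and $f$ in the sense $f\circ g=g\circ\widehat{f}$; the homeomorphism $h$ of the statement is then $g^{-1}$, which takes $\widetilde{\CC}$ to $\CC'$ and preserves the appropriate cell structure. First, $\CC'$ is $\widehat{f}$-forward invariant: since $H_1(\CC)=\CC'$,
\[
  \widehat{f}^{-1}(\CC')=(H_1\circ f)^{-1}(\CC')=f^{-1}(H_1^{-1}(\CC'))=f^{-1}(\CC)\supset\CC'.
\]
Because $H_t$ fixes $\post(f)$ pointwise, one also has $\crit(\widehat{f})=\crit(f)$, $\post(\widehat{f})=\post(f)$, and $\widehat{f}^{-1}(\post(f))=f^{-1}(\post(f))$, so the level-$1$ skeletons of $(\widehat{f},\CC')$ and $(f,\CC)$ coincide as subsets of $S^2$. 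The map $g$ is built as a uniform limit of finite compositions of iterated $f$-lifts of $H$, and the combinatorial-expansion hypothesis is used to secure convergence.

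\textbf{Iterative lifting.} Set $H^{(0)}:=H$ and, for $k\geq 1$, inductively let $H^{(k)}$ be the unique lift of $H^{(k-1)}$ through $f$ with $H^{(k)}_0=\id$; this exists because $H^{(k-1)}$ is an isotopy rel.\ $f^{-(k-1)}(\post(f))$ and $f$ is unramified outside $\crit(f)\subset f^{-1}(\post(f))$. The identity $f\circ H^{(k)}_t=H^{(k-1)}_t\circ f$ and induction on $k$ imply that the track of $H^{(k)}$ at any $x$ is the $f^k$-lift, starting at $x$, of the $H$-track at $f^k(x)$. Define
\[
  g_n:=H^{(n)}_1\circ H^{(n-1)}_1\circ\cdots\circ H^{(1)}_1,\qquad g_0:=\id.
\]
A telescopic calculation (commute $f$ past each factor in turn, picking up $H^{(0)}_1=H_1$ at the end) yields the recursion $f\circ g_n=g_{n-1}\circ\widehat{f}$; each $g_n$ is a homeomorphism fixing $\post(f)$ and isotopic to $\id$ rel.\ $\post(f)$.

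\textbf{Convergence and finish.} The move from $g_{n-1}(x)$ to $g_n(x)=H^{(n)}_1(g_{n-1}(x))$ is carried by a track of $H^{(n)}$ at $g_{n-1}(x)$, i.e., by an $f^n$-lift of a fixed $H$-track in $S^2\setminus\post(f)$. Expansion of $f$ yields $\mesh f^{-n}(\CC)\to 0$, and the combinatorial-expansion hypothesis on $(\widehat{f},\CC')$ --- which at the level-$1$ skeleton agrees with the analogous statement for $(f,\CC)$ --- combined with the shrinking/chain-of-tiles estimates of \cite[Chapter~12]{bonk:meyer:expanding} upgrades this to a geometric bound $C\Lambda^{-n}$, $\Lambda>1$, on the diameters of such lifted tracks. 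Summability makes $(g_n)$ uniformly Cauchy, the limit $g$ is continuous, and a standard winding-number/monotone-limit argument shows $g$ is a homeomorphism. Passing to the limit gives $f\circ g=g\circ\widehat{f}$, whence $\widetilde{\CC}:=g(\CC')$ is a Jordan curve with $\post(f)\subset\widetilde{\CC}$ and $f(\widetilde{\CC})=g(\widehat{f}(\CC'))\subset\widetilde{\CC}$. The homeomorphism $h:=g^{-1}$ fixes $\post(f)$, sends $\widetilde{\CC}$ to $\CC'$, maps $f^{-1}(\widetilde{\CC})$ bijectively to $\widehat{f}^{-1}(\CC')=f^{-1}(\CC)$ and $f^{-1}(\post(f))$ to itself, giving the ``furthermore'' part.

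\textbf{Main obstacle.} The existence of the lifts $H^{(k)}$ and the recursion $f\circ g_n=g_{n-1}\circ\widehat{f}$ are formal consequences of isotopy lifting, holding under mere Thurston equivalence with no expansion needed. What can fail is summable geometric decay of the displacements --- the classical Thurston-obstruction phenomenon --- and the principal technical step is to upgrade the purely combinatorial hypothesis ``no $N$-tile joins opposite sides of $\CC'$'' into a quantitative exponential bound on tile diameters, via chain arguments for connected unions of tiles. This is the content of Chapter~12 of \cite{bonk:meyer:expanding}, and is where the bulk of the real work sits.
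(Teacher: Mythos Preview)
The paper does not prove this theorem; it is quoted verbatim from \cite[Theorem~14.4 and Corollary~14.14]{bonk:meyer:expanding}. Your sketch is in the spirit of the Bonk--Meyer argument (iterated lifting of the isotopy $H$ and passage to a limit), and the formal identities you record --- $\widehat f^{-1}(\CC')=f^{-1}(\CC)$, the lifting recursion $f\circ H^{(k)}_t=H^{(k-1)}_t\circ f$, and $f\circ g_n=g_{n-1}\circ\widehat f$ --- are correct.

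There is, however, a real gap, and it is not where you locate it. Uniform convergence of $(g_n)$ already follows from the expansion of $f$ alone: each $H^{(n)}$-track is an $f^n$-lift of an arc drawn from the fixed compact family of $H$-tracks, and in any visual metric for $f$ such lifts have diameter $\lesssim \Lambda^{-n}$. The combinatorial-expansion hypothesis on $(\widehat f,\CC')$ is not what produces this summable decay, contrary to your ``Main obstacle'' paragraph. The genuine difficulty --- and the only place the hypothesis is truly used --- is the \emph{injectivity} of the limit $g$. A uniform limit of homeomorphisms of $S^2$ is in general merely a monotone (cell-like) surjection; there is no ``standard winding-number/monotone-limit argument'' that upgrades it to a homeomorphism, and without injectivity $g(\CC')$ need not be a Jordan curve.

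In Bonk--Meyer this is handled indirectly: the combinatorial expansion of $\widehat f$ for its invariant curve $\CC'$ is first used to realize $\widehat f$ by an expanding Thurston map in the same Thurston class, and then the theorem that two Thurston-equivalent \emph{expanding} maps are topologically conjugate --- proved by running the lifting construction symmetrically from both sides, so that both $(g_n)$ and $(g_n^{-1})$ converge --- yields an honest conjugacy $g$. Your one-sided construction produces only a semiconjugacy; to close the argument you must either supply a direct proof that the fibers of $g$ are points (using combinatorial expansion of $\widehat f$), or follow Bonk--Meyer and pass through an expanding model for $\widehat f$.
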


The second statement basically says that the $1$-tiles for
$(f,\CC)$ divide the $0$-tiles for $(f,\CC')$ in the same
combinatorial fashion as the $1$-tiles for $(f,\widetilde{\CC})$
subdivide the $0$-tiles for $(f,\widetilde{\CC})$. Thus by
choosing the Jordan curves $\CC$ and $\CC'$ in a certain way, we
may ensure that the invariant curve $\widetilde{\CC}$ has
certain desired properties. This will be the theme of the next
section.


The following proposition and lemma finishes the proof of
Theorem~\ref{thm:inv_C_expT} in conjunction with
Theorem~\ref{thm:invC_fromCC1}. We will be using the following
notation. Let $\CC$ be an oriented Jordan curve and $p,q\in
\CC$. Then $\CC(p,q)$ denotes the closed arc on $\CC$ from $p$
to $q$.

\begin{proposition}
  [{\cite[Proposition~11.7 and proof of
    Lemma~11.17]{bonk:meyer:expanding}}]
  \label{prop:isotreln}
  Suppose $\CC$ is an oriented Jordan curve in $S^2$ and
  $P\subset \CC$ a
  set consisting of $n\ge 3$ distinct points
  $p_1, \dots, p_n, p_{n+1}=p_1$ in cyclic order on $\CC$.
  Then there exists
  $\delta>0$ and $\epsilon_0>0$ satisfying the following.

  \begin{enumerate}
  \item
    Let $\CC'$ be another Jordan curve in $S^2$ passing through
    the points of $P$ in the same cyclic order as $\CC$, and let
    $\CC'(p_i,p_{i+1})$ be the arc on $\CC'$ with endpoints
    $p_i$ and $p_{i+1}$. If
    \begin{equation*}
      \CC'(p_i,p_{i+1})
      \subset
      \mathcal{N}^\delta(\CC(p_i,p_{i+1}))
    \end{equation*}
    for all $i=1, \dots, n$, then there exists an isotopy $H_t$
    on $S^2$ rel.\ $P$ such that $H_0=\id_{S^2}$ and
    $H_1(\CC)=\CC'$.
  \item
    If $K\subset S^2$ is a set with $\diam(K)< \epsilon_0$ then
    $K$ does not join opposite sides of $\CC'$.
  \end{enumerate}
\end{proposition}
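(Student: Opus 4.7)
The plan is to prove the two parts separately, treating (1) as a Schoenflies-type construction done arc by arc, and (2) as a distance/diameter estimate near the vertices.

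For part (1), I would first choose $\delta>0$ small enough that each closed neighborhood $D_i := \overline{\mathcal{N}^\delta(\CC(p_i,p_{i+1}))}$, after an arbitrarily small perturbation near $p_i$ and $p_{i+1}$ if necessary, is a closed Jordan disk whose boundary meets $P$ exactly in $\{p_i,p_{i+1}\}$, whose interior is disjoint from the other arcs $\CC(p_j,p_{j+1})$, and such that $D_i\cap D_j$ is contained in an arbitrarily small neighborhood of $\{p_i,p_{i+1}\}\cap\{p_j,p_{j+1}\}$ (so in particular $D_i\cap D_j=\emptyset$ when the arcs are non-adjacent). This uses that the $n$ arcs $\CC(p_i,p_{i+1})$ are disjoint except for their shared endpoints, and that each is at positive distance from the other vertices. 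The hypothesis on $\CC'$ then forces $\CC'(p_i,p_{i+1})\subset D_i$. Both $\CC(p_i,p_{i+1})$ and $\CC'(p_i,p_{i+1})$ are then simple arcs in the disk $D_i$ with common endpoints on $\partial D_i$, and each cuts $D_i$ into two Jordan subdomains. A standard Schoenflies-type argument (already used repeatedly in \cite{bonk:meyer:expanding}) produces an isotopy $H^i\colon D_i\times[0,1]\to D_i$ which is the identity on $\partial D_i$ and with $H^i_1(\CC(p_i,p_{i+1}))=\CC'(p_i,p_{i+1})$. Extending each $H^i$ by the identity off $D_i$ and then concatenating (or, since their supports have essentially disjoint interiors, superimposing) yields the desired isotopy $H$ on $S^2$ rel $P$ with $H_1(\CC)=\CC'$.

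For part (2), the idea is that a set of small diameter cannot reach two arcs of $\CC'$ that are far apart, and when it reaches several arcs it must sit near a common accumulation point, i.e.\ near a vertex $p_i$. In the case $n\ge 4$, I would further shrink $\delta$ so that the $\delta$-neighborhoods of any two \emph{non-adjacent} arcs $\CC(p_i,p_{i+1})$ and $\CC(p_j,p_{j+1})$ are disjoint, and then choose $\epsilon_0$ less than half the minimum distance between any such pair of neighborhoods. Since any two disjoint $0$-edges of $\CC'$ are non-adjacent arcs, and each lies in the corresponding $\delta$-neighborhood by hypothesis, a set of diameter $<\epsilon_0$ cannot meet both.

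In the case $n=3$ the three $0$-edges are pairwise adjacent, so the previous separation fails, and this is the most delicate point. The remedy is to observe that no single point of $S^2$ lies on all three arcs of $\CC$: indeed each vertex $p_i$ lies only on the two adjacent arcs, so $\eta_i:=\dist\!\bigl(p_i,\CC(p_{i+1},p_{i+2})\bigr)>0$, and any non-vertex point of $S^2$ lies at positive distance from at least one arc. I would therefore fix $\delta<\tfrac{1}{4}\min_i\eta_i$ and take $\epsilon_0$ smaller than $\tfrac{1}{4}\min_i\eta_i$; then the $\epsilon_0$-ball around any $x\in S^2$ meets the $\delta$-neighborhood of at most two of the three arcs $\CC(p_i,p_{i+1})$, and hence, by the hypothesis on $\CC'$, meets at most two of the arcs $\CC'(p_i,p_{i+1})$. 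A set of diameter $<\epsilon_0$ therefore cannot join all three $0$-edges of $\CC'$.

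The main obstacle I expect is in part (1), specifically verifying that the local isotopies $H^i$ can indeed be taken supported in $D_i$ and equal to the identity on $\partial D_i$, so that they patch together. This is where the two arcs sharing endpoints on $\partial D_i$ is essential, since it lets one invoke Schoenflies in each of the two complementary Jordan subdomains of $D_i$; the arc-by-arc organization, guaranteed by the hypothesis $\CC'(p_i,p_{i+1})\subset\mathcal{N}^\delta(\CC(p_i,p_{i+1}))$, is what makes the patching clean.
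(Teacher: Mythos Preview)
The paper does not actually prove this proposition; it is quoted from \cite{bonk:meyer:expanding} (Proposition~11.7 and the proof of Lemma~11.17 there), so there is no proof in the present paper to compare against. Your outline is along the right lines and close in spirit to how the result is handled in \cite{bonk:meyer:expanding}, but two points deserve tightening.

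In part~(1), the disks $D_i$ for adjacent arcs necessarily overlap near the shared vertex, and your isotopies $H^i$, being supported in $D_i$ and fixing only $\partial D_i$, may well disagree on that overlap; concatenating or ``superimposing'' them does not obviously produce a well-defined ambient isotopy. The standard remedy is to first work in small pairwise disjoint disks $B_i$ around the vertices $p_i$, isotoping $\CC$ to agree with $\CC'$ inside each $B_i$; once the two curves coincide near every vertex, the remaining portions of the arcs lie in genuinely disjoint regions and your Schoenflies argument applies cleanly to each. You flag this as the main obstacle, correctly, but the fix is a change in the decomposition (vertex-first, then arcs), not a perturbation of the $D_i$.

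In part~(2) for $n=3$, the quantity $\min_i\eta_i=\min_i\dist\bigl(p_i,\CC(p_{i+1},p_{i+2})\bigr)$ only controls how close a \emph{vertex} is to its opposite arc; it does not by itself rule out a point $x$ far from all vertices that is nonetheless close to all three arcs, since the arcs could wind near one another away from the $p_i$. The correct uniform constant is
\[
m:=\min_{x\in S^2}\ \max_{i}\ \dist\bigl(x,\CC(p_i,p_{i+1})\bigr),
\]
which is strictly positive by compactness because no point of $S^2$ lies on all three arcs; taking $\delta+\epsilon_0<m$ then yields the conclusion exactly as you argue. Your idea is right, only the constant needs to be replaced.
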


\begin{theorem}[{\cite[Lemma~11.17]{bonk:meyer:expanding}}]
  \label{thm:ex_C1_in_C}
  Let $f\colon S^2\to S^2$ be an expanding Thurston map. Let
  $\CC\subset S^2$ be an arbitrary Jordan curve with
  $\post(f)\subset \CC$ and $\delta>0$ be arbitrary. Then for
  each sufficiently large
  $n\in \N$ we can find a Jordan curve $\CC'\subset f^{-n}(\CC)$
  that traverses the the postcritical points of $f$ in the same
  cyclic order as $\CC$ and for any two postcritical points
  $p_i, p_{i+1}$ that are consecutive on $\CC$ we have
  \begin{equation*}
    \CC'(p_i,p_{i+}) \subset \mathcal{N}^\delta(\CC(p_i,p_{i+1})).
  \end{equation*}
\end{theorem}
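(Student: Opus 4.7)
The plan is to build $\CC'$ arc by arc: along each piece $\alpha_i := \CC(p_i,p_{i+1})$ between consecutive postcritical points we replace $\alpha_i$ with a simple $n$-edge path $\gamma_i$ from $p_i$ to $p_{i+1}$ staying in $\mathcal{N}^\delta(\alpha_i)$, and then concatenate. The crucial input from expansion is that, for $n$ large, every $n$-tile has arbitrarily small diameter (\cite[Lemma~6.1]{bonk:meyer:expanding}), so the $n$-edges in a thin ``tube'' around $\alpha_i$ already lie well inside $\mathcal{N}^\delta(\alpha_i)$.

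First I would fix an auxiliary scale $\eta\in(0,\delta/10)$ small enough that for non-adjacent arcs $\alpha_i,\alpha_j$ the neighborhoods $\mathcal{N}^{3\eta}(\alpha_i)$ and $\mathcal{N}^{3\eta}(\alpha_j)$ are disjoint, and for adjacent arcs they intersect only inside a small ball around the shared postcritical endpoint. Such $\eta$ exists by compactness since the $\alpha_i$'s meet pairwise only at shared endpoints. By expansion, pick $n$ so that every $n$-tile has diameter less than $\eta$. Because $\post(f)\subset f^{-n}(\post(f))$, each $p_i$ is an $n$-vertex. Let $A_i$ denote the union of all $n$-tiles intersecting $\alpha_i$; then $A_i\subset \mathcal{N}^\eta(\alpha_i)\subset\mathcal{N}^\delta(\alpha_i)$, and $A_i$ is connected, being a union of closed connected $n$-tiles that all meet the connected set $\alpha_i$. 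Its $1$-skeleton is therefore a connected subgraph of $f^{-n}(\CC)$ containing the $n$-vertices $p_i$ and $p_{i+1}$, so there is an $n$-edge path in $A_i$ from $p_i$ to $p_{i+1}$, which by removing loops we may choose to be a simple arc $\gamma_i$.

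Concatenating these in cyclic order gives a closed loop $\CC' := \gamma_1\cup\cdots\cup\gamma_k$ that visits $p_1,\dots,p_k$ in the same cyclic order as $\CC$, with $\gamma_i\subset\mathcal{N}^\delta(\alpha_i)$ by construction; non-adjacent $\gamma_i,\gamma_j$ are automatically disjoint by the choice of $\eta$. The main obstacle, and the only delicate point, is to ensure that two consecutive arcs $\gamma_{i-1}$ and $\gamma_i$ meet \emph{only} at the common vertex $p_i$, so that $\CC'$ is in fact a Jordan curve. For this I would use the local cell-complex structure at $p_i$: the $n$-tiles containing $p_i$ form a cyclically ordered ``flower'' with $p_i$ on their boundaries, and $A_{i-1}\cap A_i$ is contained in this flower for $n$ large. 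Since $\alpha_{i-1}$ and $\alpha_i$ enter $p_i$ through distinct petals of the flower, one can refine the choices of $\gamma_{i-1}$ and $\gamma_i$ to traverse only those petals through which the corresponding $\alpha$-arc passes, forcing $\gamma_{i-1}\cap\gamma_i=\{p_i\}$. This local routing maneuver is the technical heart of the argument and is precisely the mechanism of \cite[Lemma~11.17]{bonk:meyer:expanding}; once it is carried out, $\CC'$ is a Jordan curve satisfying all the required properties.
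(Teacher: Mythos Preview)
The paper does not give its own proof of this statement; it is quoted verbatim from \cite[Lemma~11.17]{bonk:meyer:expanding}, so there is nothing to compare against here.

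That said, your outline is the correct high-level approach and is essentially the one in Bonk--Meyer: cover each $0$-arc $\alpha_i$ by the $n$-tiles meeting it, extract a simple $n$-edge path $\gamma_i$ in that cover, and concatenate. The separation of non-adjacent $\gamma_i$'s via the choice of $\eta$ is fine.

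The one place where your sketch is not quite right is the local routing at a postcritical point $p_i$. You write that ``$\alpha_{i-1}$ and $\alpha_i$ enter $p_i$ through distinct petals of the flower'' and propose to restrict each $\gamma$ to the petals its corresponding $\alpha$-arc uses. But $\CC$ is an \emph{arbitrary} Jordan curve, so $\alpha_{i-1}$ and $\alpha_i$ may be topologically wild near $p_i$ and can each wander through \emph{all} petals of the $n$-flower; there is no well-defined ``petal of entry'', and restricting to the petals visited by $\alpha$ gains nothing. The actual mechanism in \cite{bonk:meyer:expanding} works purely at the level of the $n$-edge paths, not the original $\CC$-arcs: once $\gamma_{i-1}$ and $\gamma_i$ are chosen, any unwanted intersection lies in the flower $V^n(p_i)$, and one reroutes one of the two paths along $\partial V^n(p_i)$ (a Jordan curve of $n$-edges) to separate them, using that $V^n(p_i)$ is a closed Jordan domain. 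So the fix is combinatorial in the $n$-complex, not topological in $\CC$; apart from this, your proposal is sound.
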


\section{Invariant Jordan curves that subdivide sectors}
\label{sec:invar-jord-curv}

As we explained in the introduction, we will construct a quotient map $f$ from $R$ which will be expanding.
We wish to consider a forward invariant curve for (an iterate of) $f$ 
 and construct a forward-invariant curve within its lift. In
 general, this construction will not result in a Jordan curve
 going through the postcritical set of $R$, cf. Lemma
 \ref{lma:liftg}. 
We will be need a slight variant of
Theorem~\ref{thm:inv_C_expT}, which will be proved in this
section.

Let $f\colon S^2\to S^2$ be an expanding Thurston map. Assume
that the Jordan curve $\CC\subset S^2$ with $\post(f) \subset
\CC$ is $f$-invariant. Then the $n$-tiles \emph{subdivide} the
$0$-tiles. This means that each $n$-tile is contained in
exactly one $0$-tile. Let us fix a postcritical point $p$.
There are exactly two $0$-edges, $E, E'$ incident to $p$. Given
an $n\in \N$, let $e_1, \dots, e_N$ be the $n$-edges incident to
$p$, ordered cyclically around $p$. Note that $N= 2\deg(f^n,
p)$. Thus $N$, i.e., the number of such $n$-edges, goes to
$\infty$ as $n\to \infty$ if and only if $p$ is of
Fatou-type. Since $\CC$ is $f$-invariant, there are exactly two
of these $n$-edges, say $e_i$ and $e_j$, that are contained in
the $0$-edges $E$ and
$E'$ respectively. Relabeling if necessary, we can assume that
$e_i=e_1\subset E$.

Now, it is entirely possible that $e_j= e_2$ or that
$e_j=e_N$. In this case there are no $n$-edges $e_k$ incident to
$p$ contained in the sector between $E$ and $E'$, respectively
in the sector between $E'$ and $E$. We then say that the \emph{sectors}
between $E$ and $E'$
are \emph{not subdivided} by the $n$-edges,
otherwise we say that the \emph{sectors} between $E$ and $E'$
\emph{are subdivided} by the $n$-edges.
It should be pointed out, that is possible that $p$ is of
Fatou-type and the sectors between $E$ and $E'$ are not
subdivided by $n$-edges for any $n\in \N$.

The following theorem however means that we can avoid this
phenomenon if desired.

\begin{theorem}
  \label{thm:invC_subdivSector}
  Let $f\colon S^2\to S^2$ be an expanding Thurston map. Then
  for each sufficiently high iterate $F=f^n$ there is an
  $F$-invariant Jordan curve $\CC\subset S^2$ with
  $\post(f)\subset \CC$, that has the additional property that
  for any Fatou-type postcritical point $p$ the sectors between
  the $0$-edges incident to $p$ are subdivided by the $n$-edges.
\end{theorem}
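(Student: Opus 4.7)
The plan is to revisit the proof of Theorem~\ref{thm:inv_C_expT} outlined above, choosing the auxiliary pair of Jordan curves entering Theorem~\ref{thm:invC_fromCC1} with extra care so that the $1$-cell structure of $(f^n,\CC)$ already exhibits the desired sector subdivision, and then to use the homeomorphism $h$ supplied by that theorem to transfer the property to the invariant curve $\widetilde{\CC}$. Since $h$ sends $1$-cells of $(f^n,\widetilde{\CC})$ to $1$-cells of $(f^n,\CC)$ with $h(\widetilde{\CC})=\CC'$ and $h(\post(f))=\post(f)$, the two $1$-edges of $(f^n,\widetilde{\CC})$ at a Fatou-type postcritical point $p$ that are contained in $\widetilde{\CC}$ correspond under $h$ to the two $1$-edges of $(f^n,\CC)$ at $p$ that are contained in $\CC'$; because $h$ is a local homeomorphism at $p$ it preserves the cyclic order of incident edges. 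Thus it is enough to arrange that, at each Fatou-type $p$, the two $1$-edges of $(f^n,\CC)$ used by $\CC'$ locally at $p$ are non-adjacent among the $2\deg(f^n,p)$ edges cyclically ordered around $p$ (this is the same condition as both sectors being subdivided).

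First, I would fix an arbitrary Jordan curve $\CC$ with $\post(f)\subset\CC$, and note that for each Fatou-type postcritical point $p$ one has $\deg(f^n,p)\to\infty$, so for all sufficiently large $n$ the number $2\deg(f^n,p)$ of $1$-edges at $p$ is at least four and non-adjacent pairs exist. For a small $\delta>0$ to be chosen and $n$ large, apply Theorem~\ref{thm:ex_C1_in_C} to obtain a Jordan curve $\CC^*\subset f^{-n}(\CC)$ isotopic to $\CC$ rel $\post(f)$ with $\CC^*(p_i,p_{i+1})\subset\mathcal{N}^\delta(\CC(p_i,p_{i+1}))$ for every pair of consecutive postcritical points. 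At each Fatou-type $p$ where $\CC^*$ already enters and leaves along non-adjacent $1$-edges, leave $\CC^*$ untouched; at the remaining Fatou-type points, choose a small topological disk $U_p\ni p$ with $U_p\cap\post(f)=\{p\}$ and replace $\CC^*\cap U_p$ by a simple arc in $f^{-n}(\CC)\cap\overline{U_p}$ that enters $p$ along the same edge as $\CC^*$ but leaves along a prescribed non-adjacent edge, rejoining the original curve at a suitable $n$-vertex on its outgoing side. The resulting Jordan curve $\CC'$ still lies in $f^{-n}(\CC)$, is isotopic to $\CC^*$ (hence to $\CC$) rel $\post(f)$, and still satisfies $\CC'(p_i,p_{i+1})\subset\mathcal{N}^\delta(\CC(p_i,p_{i+1}))$ provided each $U_p$ is small enough. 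Proposition~\ref{prop:isotreln} then supplies an isotopy $H$ rel $\post(f)$ with $H_0=\id_{S^2}$ and $H_1(\CC)=\CC'$, and shrinking $\delta$ if needed, expansion of $f$ yields, exactly as in the proof of Theorem~\ref{thm:inv_C_expT}, that $\widehat{f}=H_1\circ f^n$ is combinatorially expanding for $\CC'$. Theorem~\ref{thm:invC_fromCC1} then produces the $f^n$-invariant Jordan curve $\widetilde{\CC}$ together with $h$, and the opening paragraph concludes the argument.

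The main obstacle is the local rerouting of $\CC^*$ inside each $U_p$. I expect to handle it as follows: by Lemma~\ref{lem:pre_1post} and the general theory of the cell complex, the $\deg(f^n,p)$ $1$-tiles of $(f^n,\CC)$ having $p$ as a vertex are closed Jordan regions arranged cyclically around $p$, with each consecutive pair sharing one of the $1$-edges at $p$. Their boundaries, restricted to any sufficiently small disk $U_p$, assemble into paths in $f^{-n}(\CC)\cap(U_p\setminus\{p\})$ connecting the outer endpoints of any two $1$-edges incident to $p$. Concatenating such a path with segments of $\CC^*$ yields the simple closed replacement $\CC'$, while the smallness of $U_p$ controls the $\delta$-proximity to $\CC$ and preserves the isotopy class rel $\post(f)$.
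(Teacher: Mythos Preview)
Your overall strategy coincides with the paper's: fix $\CC$, produce $\CC^*\subset f^{-n}(\CC)$ via Theorem~\ref{thm:ex_C1_in_C}, locally modify $\CC^*$ near each Fatou-type postcritical point where the two incident $n$-edges are adjacent, then feed the modified curve into Theorem~\ref{thm:invC_fromCC1} and read off the sector property from the combinatorial conjugacy $h$. Your discussion of the transfer via $h$ is correct and in fact more explicit than the paper's one-line appeal to Theorem~\ref{thm:invC_fromCC1}.

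The gap is in the local rerouting, which is more delicate than your sketch suggests. First, the phrase ``restricted to any sufficiently small disk $U_p$'' is misleading: if $U_p$ is contained in the open $n$-flower at $p$, then $f^{-n}(\CC)\cap(U_p\setminus\{p\})$ is just the disjoint union of initial segments of the $n$-edges at $p$, with no cross-connections in $U_p\setminus\{p\}$. The rerouting must take place in the closed flower $V^n(p)=\bigcup\{X\in\X^n:p\in X\}$; this set is small only because each $n$-tile has diameter $<\delta$, which is why the paper imposes \eqref{eq:n_suff_large}. (Incidentally, there are $2\deg(f^n,p)$ such tiles, not $\deg(f^n,p)$.)

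Second, and more seriously, you assert that the replacement yields a \emph{simple} closed curve without justification. The curve $\CC^*$ may enter and leave $V^n(p)$ several times on each side of $p$, so an $n$-edge path along $\partial V^n(p)$ from your new exit edge back to $\CC^*$ could meet the incoming arc $\CC^*(q,p)$, destroying simplicity. The paper deals with this in two steps (Lemmas~\ref{lem:replaceCab} and~\ref{lem:replace_gab}): one first identifies the \emph{first} and \emph{last} $n$-vertices $a,b$ where $\CC^*$ meets $V^n(p)$, and shows that any $n$-edge path $\gamma(a,b)\subset V^n(p)$ through $p$ disjoint from $\CC^*(q,a)\cup\CC^*(b,q')$ except at its endpoints can replace $\CC^*(a,b)$ while preserving the $\delta$-proximity. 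Then a specific $\gamma(a,b)$ is produced by routing around the boundary of a single neighboring tile $Y$ (or, if that path crosses the outgoing side, around the tile $Y'$ on the other side), with a short case analysis to rule out crossings. Your proposal needs an argument of this kind to close.
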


Here ``$n$-edges'' are $n$-edges of $(f,\CC)$, i.e., $1$-edges
for $(F,\CC)$.  To prove the theorem above, we need to find a
Jordan curve $\CC'\subset f^{-n}(\CC)$ satisfying the conditions
from Theorem~\ref{thm:invC_fromCC1} with the following
additional property. At each Fatou-type postcritical point $p$,
the sectors between the $0$-edges $E,E'\subset \CC'$ incident to
$p$ are subdivided by the $n$-edges. Equivalently, the $n$-edges
$e$ and $e'$ in $\CC'$ that are incident to $p$ have the
property that in each of the sectors between them, there is at
least one $n$-edge incident to $p$.

Let $\CC\subset S^2$ be a Jordan curve with $\post(f) \subset
\CC$, which we orient arbitrarily. Let $\delta>0$ and $\epsilon_0$ be the constants from
Proposition~\ref{prop:isotreln} for $P= \post(f)$. Let $n\in \N$
be sufficiently large such that the diameter of each $n$-tile
$X$ is smaller than $\epsilon_0$ and $\delta$,
\begin{equation}
  \label{eq:n_suff_large}
  \diam(X)< \min\{\epsilon_0,\delta\}.
\end{equation}
Furthermore, we assume that
$n\in \N$ is
sufficiently large, so that according to
Theorem~\ref{thm:ex_C1_in_C} there is a Jordan curve $\CC'\subset
f^{-n}(\CC)$ that traverses the postcritical points in the same
cyclic order as $\CC$ and for any two postcritical points $p_i,
p_{i+1}$ that are consecutive on $\CC$ we have $\CC'(p_i,
p_{i+1}) \subset \mathcal{N}^\delta(\CC(p_i, p_{i+1}))$.

From Theorem~\ref{thm:invC_fromCC1} it follows that there
is an $f^n$-invariant Jordan curve $\widetilde{\CC}\subset S^2$
with $\post(f) \subset \widetilde{\CC}$.

Now consider a Fatou-type postcritical point $p$.
Clearly $\CC'$ contains two
$n$-edges $e,e'$ incident to $p$. In the case that there are
other $n$-edges incident to $p$ in each of the two sectors
between $e$ and $e'$ it follows from the second statement in
Theorem~\ref{thm:invC_fromCC1} that the $f^n$-invariant curve
$\widetilde{\CC}$ has the property that the sectors
at $p$ are subdivided.

If the sectors between $e,e'$ are not subdivided by $n$-edges we
will adjust $\CC'$ in the following. We will ensure that we can still apply
Proposition~\ref{prop:isotreln}, and that the $f^n$-invariant
curve resulting from Theorem~\ref{thm:invC_fromCC1} has the
property that the $n$-edges subdivide the sectors at $p$.

Let us consider the union of all $n$-tiles containing $p$, i.e.,
\begin{equation*}
  V^n(p):= \bigcup\{X\in \X^n \mid p\in X\}.
\end{equation*}
Clearly, this is a closed set whose boundary is a union of
$n$-edges. In the terminology of \cite{bonk:meyer:expanding},
this is the closure of the $n$-flower of $p$.

Let $q$ and $q'$
be the postcritical points that precede and succeed
$p$ on $\CC'$. Let $a$ be the first point on $\CC'(q,p)$
that intersects $V^n(p)$, and $b$ be the last point on
$\CC'(p,q')$ that intersects $V^n(p)$. Clearly $a$ and $b$
are $n$-vertices.

\begin{lemma}
  \label{lem:replaceCab}
  In the setting as above assume that $\gamma(a,b)$ is an
  $n$-edge path contained in $V^n(p)$ with $p\in \gamma(a,b)$
  that satisfies
  \begin{equation*}
    \gamma(a,b)\cap \CC'(q,a) = \{a\}
    \text{ and }
    \gamma(a,b)\cap \CC'(b,q') = \{b\}.
  \end{equation*}
  Then replacing $\CC'(q, q')$ by $\CC'(q,a) \cup
  \gamma(a,b) \cup \CC'(b,q')$ results in a Jordan curve
  $\CC''\subset f^{-n}(\CC)$ that still satisfies the
  assumptions of Proposition~\ref{prop:isotreln}.
\end{lemma}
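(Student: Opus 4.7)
My plan is to verify directly that $\CC''$ meets the three hypotheses of Proposition~\ref{prop:isotreln}: that it is a Jordan curve in $f^{-n}(\CC)$, that it traverses $\post(f)$ in the same cyclic order as $\CC$, and that each arc $\CC''(p_i,p_{i+1})$ lies in $\mathcal{N}^\delta(\CC(p_i,p_{i+1}))$. The inclusion $\CC''\subset f^{-n}(\CC)$ is immediate, since $\CC'\subset f^{-n}(\CC)$ and every $n$-edge is a subset of $f^{-n}(\CC)$.

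For the Jordan curve property I would decompose $\CC''$ into the new arc $\alpha:=\CC'(q,a)\cup\gamma(a,b)\cup\CC'(b,q')$ from $q$ to $q'$ and the ``long'' sub-arc $\beta$ of $\CC'$ from $q'$ back to $q$, and check that $\alpha\cap\beta=\{q,q'\}$. The two assumptions $\gamma(a,b)\cap\CC'(q,a)=\{a\}$ and $\gamma(a,b)\cap\CC'(b,q')=\{b\}$, together with the defining properties of $a$ and $b$ as the first, respectively last, points of $\CC'(q,p)$ and $\CC'(p,q')$ in $V^n(p)$, which force $\CC'(q,a)$ and $\CC'(b,q')$ to avoid $V^n(p)$ except at their endpoints, ensure that $\alpha$ is simple. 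For $\gamma(a,b)\cap\beta=\emptyset$ one invokes the smallness of $n$-tiles from \eqref{eq:n_suff_large}: $V^n(p)$ is contained in an arbitrarily small neighborhood of $p$, and for $n$ large the compact arc $\beta$, which does not contain $p$, is disjoint from this neighborhood, while $\gamma(a,b)\subset V^n(p)$.

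The cyclic-order statement follows once $V^n(p)\cap\post(f)=\{p\}$, which again holds for $n$ large by \eqref{eq:n_suff_large}; thus $a,b$ are non-postcritical $n$-vertices and $p$ remains the unique postcritical point strictly between $q$ and $q'$ on $\CC''$. For the $\delta$-neighborhood estimate, arcs of $\CC''$ disjoint from $V^n(p)$ agree with the corresponding arcs of $\CC'$, and the two modified arcs $\CC''(q,p)=\CC'(q,a)\cup\gamma(a,p)$ and $\CC''(p,q')=\gamma(p,b)\cup\CC'(b,q')$ have their $\CC'$-parts already inside $\mathcal{N}^\delta(\CC(q,p))$, respectively $\mathcal{N}^\delta(\CC(p,q'))$, by the hypothesis on $\CC'$, while their $\gamma$-parts lie in $V^n(p)\subset B(p,\delta)$ by \eqref{eq:n_suff_large}, a ball that is contained in both neighborhoods since $p$ belongs to both $\CC(q,p)$ and $\CC(p,q')$. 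The step I expect to be the main obstacle is precisely the disjointness $\gamma(a,b)\cap\beta=\emptyset$: the hypotheses of the lemma only govern the local behaviour of $\gamma$ against $\CC'(q,a)$ and $\CC'(b,q')$, so ruling out a far-away re-entry of $\CC'$ into $V^n(p)$ genuinely requires the smallness of $n$-tiles rather than any combinatorial input local to $p$.
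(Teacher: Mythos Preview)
Your overall structure matches the paper's, and your verification of the $\delta$-neighborhood condition is essentially the paper's own argument: $\gamma(a,b)\subset V^n(p)\subset B(p,\delta)$ because every $n$-tile in $V^n(p)$ contains $p$ and has diameter $<\delta$ by \eqref{eq:n_suff_large}, and $B(p,\delta)\subset\mathcal{N}^\delta(\CC(q,p))\cap\mathcal{N}^\delta(\CC(p,q'))$ since $p$ lies on both arcs of $\CC$.

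The gap is exactly at the step you yourself flagged as the main obstacle. Your argument for $\gamma(a,b)\cap\beta=\emptyset$ reads: $\beta$ is compact and avoids $p$, so for $n$ large the small set $V^n(p)$ misses $\beta$. But $\beta$ is a sub-arc of $\CC'$, and $\CC'$ itself depends on $n$; you cannot freeze $\beta$ and then let $n$ grow. The quantity $\dist(p,\beta)$ varies with $n$, and nothing you have written bounds it below by $\diam V^n(p)$. In particular, the hypotheses of the lemma fix $n$ once and for all (via \eqref{eq:n_suff_large}); there is no further ``for $n$ large'' available inside the proof.

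The paper closes this gap not with a compactness argument but with the combinatorial half of Proposition~\ref{prop:isotreln}, namely item~(2) with the constant $\epsilon_0$. If $\gamma(a,b)$ met a side $\CC'(p_i,p_{i+1})$ distinct from $\CC'(q,p)$ and $\CC'(p,q')$, the intersection point would lie in some $n$-tile $X\subset V^n(p)$; since $p\in X$, this $X$ would join opposite sides of $\CC'$, contradicting $\diam(X)<\epsilon_0$ together with Proposition~\ref{prop:isotreln}(2). So the smallness of $n$-tiles does enter, but through $\epsilon_0$ and the ``joins opposite sides'' notion---precisely the kind of combinatorial input you dismissed as unnecessary. (One could also try to rescue your metric approach by estimating $\dist(p,\beta)\ge\dist\bigl(p,\CC(q',\dots,q)\bigr)-\delta$ via the $\delta$-closeness of $\CC'$ to $\CC$, but that requires an additional smallness assumption on $\delta$ relative to $\CC$ that is not part of the stated setup.)
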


This means that $\CC''$ traverses $\post(f)$ in the same
cyclical order as $\CC$ and for postcritical points
$p_i,p_{i+1}$ that are consecutive on $\CC$ we have
$\CC''(p_i,p_{i+1}) \subset \mathcal{N}^\delta(\CC(p_i,
p_{i+1}))$.

\begin{proof}
  We first note that $\gamma(a,b)$ does not intersect any side
  $\CC'(p_i,p_{i+1})$ that is distinct from $\CC'(q,p)$ and
  $\CC'(p,q')$. If this were the case, then any point in this
  intersection would be contained in an $n$-tile
  $X\subset V^n(p)$. Since $p\in X$, it follows that $X$ joins
  opposite sides of $\CC'$, which is a contradiction.

  Since $\gamma(a,b)$ is in the $\delta$-neighborhood of $p\in
  \CC$, it follows that $\CC''(q,p) \subset
  \mathcal{N}^\delta(\CC(q, p)$ and $\CC''(p,q')\subset
  \mathcal{N}^\delta(\CC(p, q'))$. This finishes the proof. 
\end{proof}

\begin{figure}
  \centering
  \begin{overpic}
    [width=8cm, tics=20,
    ]{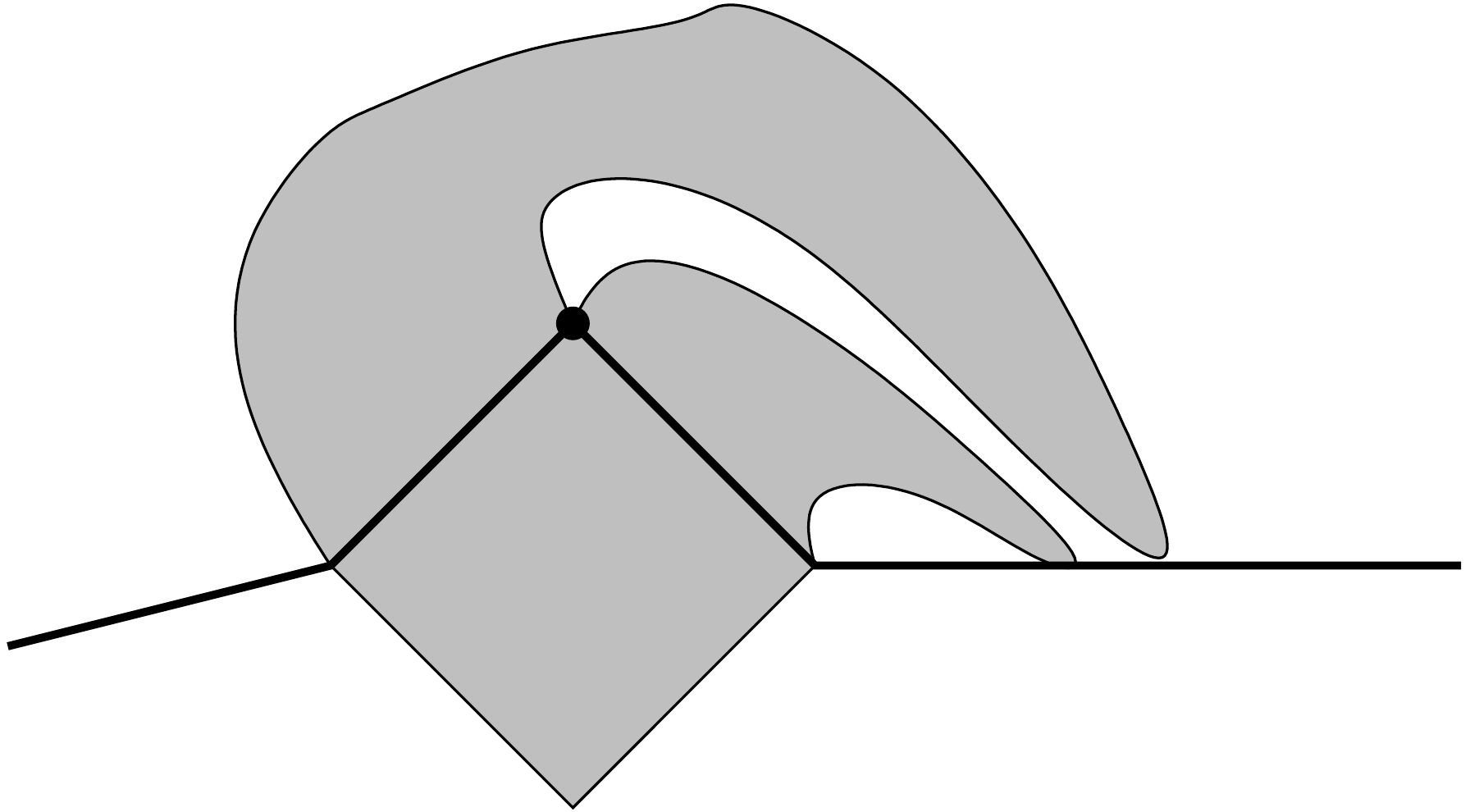}
    \put(36,16){$X$}
    \put(37.5,28){$p$}
    \put(27,38){$Y$}
    \put(52.5,26){$Y'$}
    \put(29,26){$e$}
    \put(44.5,28){$e'$}
    \put(-3,10.5){$q$}
    \put(100.5,16){$q'$}
    \put(78,13){$z$}
    \put(70,12){$y'$}
  \end{overpic}
  \caption{Constructing an $n$-edge path that subdivide
    sectors.}
  \label{fig:divide_sectors}
\end{figure}

\begin{lemma}
  \label{lem:replace_gab}
  Assume there are at least $4$ $n$-edges incident to $p$. Then
  there is an $n$-edge path $\gamma(a,b)$ that satisfies the
  assumptions of Lemma~\ref{lem:replaceCab} and 
  additionally that the two $n$-edges
  $e,e'\subset \gamma(a,b)$ incident to $p$ have the property
  that in each of the two sectors between them there are other
  $n$-edges incident to $p$.
\end{lemma}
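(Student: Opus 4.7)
The plan is to construct $\gamma(a,b)$ explicitly as $\alpha\cdot\tilde e\cdot\tilde e'\cdot\beta$, where $\tilde e$ and $\tilde e'$ are two $n$-edges incident to $p$ chosen to be non-adjacent in the cyclic order around $p$ (always possible since $N:=2\deg(f^n,p)\ge 4$), and $\alpha,\beta$ are disjoint simple sub-arcs of $\partial V^n(p)$ connecting $a$ to the outer endpoint of $\tilde e$ and the outer endpoint of $\tilde e'$ to $b$. The non-adjacency of $(\tilde e,\tilde e')$ around $p$ is precisely what yields the sector-subdivision property: each of the two sectors between them at $p$ contains at least one further $e_k$.

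For the setup, I label the $n$-edges incident to $p$ as $e_1,\dots,e_N$ in cyclic order, with outer endpoints $v_1,\dots,v_N\in\partial V^n(p)$ in the same order; they split $V^n(p)$ into $n$-tiles $T_1,\dots,T_N$, where $T_k$ is bounded by $e_k,e_{k+1}$ and the boundary arc $[v_k,v_{k+1}]\subset\partial V^n(p)$. A preliminary structural observation is that every $n$-edge in $V^n(p)$ is either one of the $e_k$ or lies on $\partial V^n(p)$ (an $n$-edge interior to $V^n(p)$ must be shared between two tiles in $V^n(p)$, hence incident to $p$). Consequently, since $\CC'$ uses only $e=e_s$ and $e'=e_{s+1}$ at $p$, the sub-paths $\alpha^*:=\CC'(a,v_s)$ and $\beta^*:=\CC'(v_{s+1},b)$ are disjoint sub-arcs of $\partial V^n(p)$. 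In the generic configuration where $a$ lies on $[v_{s-1},v_s]$ and $b$ lies on $[v_{s+1},v_{s+2}]$, I take $(\tilde e,\tilde e')=(e_s,e_{s+2})$ — non-adjacent for every $N\ge 4$, since one sector contains $e_{s+1}$ and the other contains $e_{s+3},\dots,e_{s-1}$ (reducing to $\{e_{s-1}=e_{s+3}\}$ in the extreme case $N=4$) — with $\alpha:=\alpha^*\subset[v_{s-1},v_s]$ and $\beta$ the short sub-arc of $[v_{s+1},v_{s+2}]$ from $v_{s+2}$ to $b$; these arcs are automatically disjoint, lying on different ``$T$-arcs'' of $\partial V^n(p)$. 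For non-generic positions of $a$ or $b$ (where $\alpha^*$ or $\beta^*$ passes through further vertices $v_k$, or even wraps around $\partial V^n(p)$), the indices $(i,j)$ are adjusted by picking $v_i$ or $v_j$ among these extra vertices and truncating $\alpha^*$ or $\beta^*$ accordingly; thanks to $N\ge 4$ there is always enough room to keep $(i,j)$ non-adjacent.

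The verification of the hypotheses of Lemma~\ref{lem:replaceCab} is then immediate: $\gamma(a,b)\subset V^n(p)$ by construction, $p\in\gamma(a,b)$ is the common endpoint of $\tilde e$ and $\tilde e'$, and $\gamma(a,b)\cap\CC'(q,a)=\{a\}$, respectively $\gamma(a,b)\cap\CC'(b,q')=\{b\}$, follow from $\gamma(a,b)\subset V^n(p)$ together with $\CC'(q,a)\cap V^n(p)=\{a\}$ and $\CC'(b,q')\cap V^n(p)=\{b\}$, which are built into the definitions of $a$ and $b$. The main obstacle I expect is the case analysis on the positions of $a,b$ on $\partial V^n(p)$, in particular ensuring that the candidate arcs $\alpha$ and $\beta$ can always be chosen disjoint when $\alpha^*$ or $\beta^*$ passes through vertices $v_k$ with $k\notin\{s,s+1\}$; the flexibility provided by $N\ge 4$ always leaves room for a valid choice, but exhibiting a uniform recipe demands some care.
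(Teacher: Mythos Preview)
Your approach is genuinely different from the paper's. The paper keeps most of $\CC'(a,b)$ intact and only reroutes a short terminal segment along the boundary of a \emph{single} adjacent $n$-tile $Y$ (or, if that reroute collides with the other side of $\CC'$, a short topological argument shows the symmetric reroute via $Y'$ succeeds instead). You propose to discard all of $\CC'(a,b)$ and replace it by a path $\alpha\cdot e_i\cdot e_j\cdot\beta$ built from two radial edges and two arcs on $\partial V^n(p)$.

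There is, however, a genuine gap. Your claim that $\alpha^*=\CC'(a,v_s)$ lies on $\partial V^n(p)$ rests on the structural observation about $n$-edges contained in $V^n(p)$, but that observation says nothing about edges of $\CC'(a,v_s)$ lying \emph{outside} $V^n(p)$. The point $a$ is only the \emph{first} point where $\CC'(q,p)$ meets $V^n(p)$; after $a$ the curve $\CC'$ may immediately exit $V^n(p)$ and wander through tiles not containing $p$ before eventually returning to take $e_s$ to $p$. In that case $\alpha^*\not\subset V^n(p)$, so your $\gamma(a,b)$ violates the hypothesis of Lemma~\ref{lem:replaceCab}. The same issue affects $\beta^*$; in particular there is no reason for $b$ to lie on the arc $[v_{s+1},v_{s+2}]$, so even your ``generic'' configuration is not justified.

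The idea can plausibly be repaired by choosing $\alpha,\beta$ directly as arcs on $\partial V^n(p)$ (rather than as pieces of $\CC'$), but then you must actually carry out the case analysis you only allude to, and you must also confront the fact that $\partial V^n(p)$ need not be a Jordan curve: two tiles $T_i,T_j\subset V^n(p)$ can share an edge not incident to $p$ (your ``hence incident to $p$'' is not justified), so distinct outer arcs $[v_k,v_{k+1}]$ may overlap. The paper's route avoids all of this because $\partial Y$ \emph{is} a Jordan curve, and the only obstruction that can arise is handled by the single symmetric alternative.
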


\begin{proof}
  The proof is illustrated in Figure~\ref{fig:divide_sectors}.
  Let $e\subset \CC'$ and $e'\subset \CC'$ be the $n$-edges in
  $\CC'$ that precede and
  succeed $p$ in $\CC'$. If
  each of the two sectors between $e$ and $e'$ contains at least
  one $n$-edge incident to $p$, there is nothing to prove.

  Assume now that one sector between $e$ and $e'$ does not
  contain any $n$-edge incident to $p$. Then $e$ and $e'$ are
  contained in a common $n$-tile $X$ incident to $p$, i.e.,
  $X\subset V^n(p)$. Let $Y\subset V^n(p)$ be the $n$-tile
  distinct from $X$ that contains $e$, and $Y'\subset V^n(p)$ be
  the $n$-tile distinct from $X$ that contains $e'$. Since we
  assume that there are at least $4$ $n$-edges incident to $p$,
  it follows that $Y$ and $Y'$ are distinct.

  Let $y\in \partial Y$ be the first $n$-vertex in
  $\CC'(q,p)$ intersecting $Y$. Let
  $\gamma\subset \partial Y$ be the
  $n$-edge path in $Y$ between $y$ and $p$ that does not include
  $e$. Denote by $e_Y$ the last $n$-edge in $\gamma$, i.e., the
  $n$-edge in $\gamma$ incident to $p$.

  Note that one sector between $e'$ and $e_Y$ contains $e$,
  the other sector contains all other $n$-edges incident to $p$.

  If $\gamma$ does not intersect $\CC'(p, q')$ we may
  replace $\CC'(y,p)$ by $\gamma$, finishing the desired
  construction.

  \smallskip
  Let us now assume that $\gamma$ does intersect
  $\CC'(p,q')$. The situation is
  illustrated in Figure~\ref{fig:divide_sectors}. Let
  $z\in \partial Y$ be the first $n$-vertex on $\CC'(p, q')$
  in this intersection. Then $\CC'(p, z)$ and the $n$-edge path
  on $\partial Y$ between $p$ and $z$ that does not contain
  $e$ form a Jordan curve that encloses $Y'$. In particular $Y'$
  intersects $\CC'(q,p)$ only in $p$.

  Now let
  $y'\in \partial Y'$ be the last $n$-vertex on $\CC'(p, q')$
  intersecting $Y'$. We now replace $\CC'(p, y')$ by the $n$-edge
  path $\gamma'$ on $\partial Y'$ between $p$ and $y'$ that
  does not contain $e'$. It follows that $\gamma'$
  intersects $\CC'(q,p)$ only in $p$. Let $e_{Y'}$ be the
  $n$-edge in $\gamma'$ incident to $p$.
  One sector between $e$ and $e_{Y'}$ contains $e'$. The other
  sector 
  contains the other $n$-edges incident to $p$. This finishes
  the proof. 
\end{proof}

\begin{proof}[Proof of Theorem~\ref{thm:invC_subdivSector}]
  Let $n\in \N$ be sufficiently large such that the assumptions
  stated in the beginning of this section are
  satisfied. Furthermore, we assume that any Fatou-type
  postcritical point is incident to at least $4$ $n$-edges.

  Then from the Jordan curve $\CC'$ we can construct the Jordan
  curve $\CC''$ by changing the curve in $V^n(p)$ for each
  Fatou-type postcritical point $p$ as in
  Lemma~\ref{lem:replace_gab}. Theorem~\ref{thm:invC_fromCC1}
  finishes the proof.
\end{proof}

Let $\CC\subset S^2$ be an $f^n$-invariant Jordan curve with
$\post(f)\subset \CC$ such that the sectors between $0$-edges
are subdivided by $n$-edges for some $n\in \N$ as given by
Theorem~\ref{thm:invC_subdivSector}. Note that for any iterate
$f^{nk}$ of $f^n$ the Jordan curve $\CC$ is again
$f^{nk}$-invariant and the sectors at Fatou-type postcritical
points are again subdivided by $nk$-edges.

\section{Expanding quotients}
\label{sec:expanding-quotients}

In this and the following section we prove
Theorem~\ref{thm:main}. Let $R\colon \cbar\to \cbar$ be a
postcritically finite rational map with Julia set homeomorphic
to the Sierpi\'nski carpet. Note that $R$ is not expanding in
the sense of Definition~\ref{def:f}. To be able to use
Theorem~\ref{thm:invC_subdivSector}, we take a quotient of $R$
that will turn out to be an expanding Thurston map.

Define an equivalence relation on $\cbar$ by $z\sim w$ if
$z=w$ or if there is a Fatou component $\Omega$ such that $z,w\in
\overline{\Omega}$. Since $\mathcal{J}_R$ is a Sierpi\'nski
carpet it follows that distinct Fatou components $\Omega$ and
$\Omega'$ have disjoint closures. So $\sim$ is indeed an
equivalence relation.
Let us denote by $\pi\colon \cbar  \to
\cbar/\!\sim$ the quotient map.
Clearly $\pi$ is \emph{monotone}, which means that for every
point $x\in \cbar/\!\sim$ the set $\pi^{-1}(x)\subset \cbar$ is connected,
since each equivalence class is connected. This
implies that for any connected set $A\subset \cbar/\!\sim$ the
set $\pi^{-1}(A) \subset \cbar$ is connected, see
\cite[VIII.2.2]{whyburn:analytic_topology}.

\begin{theorem}
  \label{thm:quotient}
  In the setting as above we may identify $\cbar/\!\sim$ with
  $S^2$ so that the quotient map may be written as the
  continuous monotone map $\pi \colon
  \cbar \to S^2$. Furthermore there exists an expanding Thurston
  map
  $f:S^2\to S^2$ such that $f\circ \pi = \pi\circ R$.
\end{theorem}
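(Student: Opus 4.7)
The plan is to split the proof into three parts: first, identify the topological quotient $\cbar/\!\sim$ with $S^2$; second, descend $R$ to a continuous map $f$ on this quotient; and third, check that $f$ is expanding. The first two parts will follow from standard quotient and branched-covering arguments; the main obstacle will be transferring the Shrinking Lemma from $\cbar$ to the quotient in order to verify the expansion condition.

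For the first step, I would apply Moore's decomposition theorem. The equivalence classes are of two types: singletons $\{z\}$ with $z\in \mathcal{J}_R$, and closed Jordan domains $\overline{\Omega}$ corresponding to the Fatou components $\Omega$ of $R$. Since $\mathcal{J}_R$ is a Sierpi\'nski carpet, distinct such closures are disjoint, so each class is compact, connected, and has connected complement in $\cbar$. Upper semicontinuity of the decomposition will follow from the $E$-continuum property (Lemma~\ref{lem:comp_F_small}): any given neighborhood of an equivalence class contains all but finitely many of the closed Fatou components meeting it. Moore's theorem will then produce a homeomorphism $\cbar/\!\sim\,\cong S^2$, and $\pi$ will be continuous and monotone by construction.

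For the second step, note that $z\sim w$ implies $R(z)\sim R(w)$, since $R(\overline{\Omega})=\overline{R(\Omega)}$ is again a Fatou-component closure. The universal property of quotient maps then yields a unique continuous $f\colon S^2\to S^2$ with $f\circ\pi=\pi\circ R$. To verify that $f$ is an orientation-preserving branched covering, I would examine local models. At $q=\pi(z)$ with $z\in\mathcal{J}_R$, Lemma~\ref{lem:comp_F_small} ensures that $\pi$ is a homeomorphism on a small neighborhood of $z$ except for the collapsing of finitely many small nearby Fatou components, so $f$ has local degree $\deg(R,z)$ at $q$; at $q=\pi(\Omega)$, Lemma~\ref{lem:pre_1post} gives a normal form $z\mapsto z^{d_\Omega}$ for $R$ on a disk neighborhood of $\overline{\Omega}$, which passes to the quotient. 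Since $\post(f)=\pi(\post(R))$ is finite, $f$ will be a Thurston map.

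The main obstacle is showing expansion. I would fix a Jordan curve $\CC\subset S^2$ with $\post(f)\subset\CC$ and set $\widetilde{\CC}:=\pi^{-1}(\CC)\subset\cbar$; from $f\circ\pi=\pi\circ R$ one has $\pi^{-1}(f^{-n}(\CC))=R^{-n}(\widetilde{\CC})$, so components of $S^2\setminus f^{-n}(\CC)$ are precisely the $\pi$-images of components $V$ of $\cbar\setminus R^{-n}(\widetilde{\CC})$. Fixing a compatible metric $\sigma$ on $S^2$, the goal will be to show that $\diam_\sigma(\pi(V))\to 0$ uniformly in $V$ as $n\to\infty$, equivalently, that for any finite open cover $\U$ of $S^2$ and for all $n$ large enough, each $V$ lies in the $\pi$-preimage of one cover element. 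The set $\widetilde{\CC}$ decomposes into its Julia-set part $\widetilde{\CC}\cap \mathcal{J}_R$, which is compact in the orbifold $\OO$ and can be covered by finitely many pieces of small orbifold diameter, plus the closures of finitely many Fatou components whose $\pi$-images are single points. The Shrinking Lemma~\ref{lem:shrink} then gives that components of $R^{-n}$ of the Julia pieces shrink at rate $\lambda^{-n}$ in the orbifold metric, hence in any compatible metric away from $\post_{\mathcal{F}}(R)$, while the $R^{-n}$-preimages of the Fatou pieces are unions of Fatou-component closures that $\pi$ collapses to points and whose boundaries shrink by the same Shrinking Lemma applied to their boundary arcs. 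Combining these estimates with Lemma~\ref{lem:comp_F_small} to bound the finitely many exceptional large pieces will yield $\mesh f^{-n}(\CC)\to 0$ and hence expansion.
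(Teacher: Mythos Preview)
Your first two steps are essentially the paper's approach: Moore's theorem for the quotient (using the $E$-continuum property exactly as in Lemma~\ref{lem:sim_Moore_str_inv}), and descent of $R$ to a Thurston map $f$ (the paper cites the abstract Lemma~\ref{lem:quotient_thurs} rather than checking local models by hand, but your direct argument is fine in spirit; one small slip is that near a Julia point there are infinitely many small Fatou components, not finitely many, though this does not affect the local-degree conclusion).

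The expansion step, however, has a genuine gap. You apply the Shrinking Lemma to small Julia pieces of $\widetilde{\CC}=\pi^{-1}(\CC)$; this controls the components of $R^{-n}(K_i)$ for pieces $K_i\subset\widetilde{\CC}$, i.e.\ it makes the \emph{preimages of the curve} fine. But what must shrink are the \emph{complementary regions} $V\subset\cbar\setminus R^{-n}(\widetilde{\CC})$, and you give no mechanism linking the two. Each $V$ is a component of $R^{-n}$ of a $0$-tile preimage $\pi^{-1}(X^0)$, which is a large set containing several postcritical points; the Shrinking Lemma (which needs diameter $\le\delta$ and at most one postcritical point) does not apply to it. One could try to bound $\partial V$ as a chain of boundedly many small preimage pieces, but that requires showing that the Julia part of $\pi^{-1}(e)$ for an $n$-edge $e$ is a single component of $R^{-n}(\widetilde{E}_j\cap\mathcal J_R)$ traversing boundedly many $K_i$'s --- nontrivial work that your outline does not supply. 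The closing phrase about ``finitely many exceptional large pieces'' does not address this.

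The paper avoids this difficulty by a different route: it never works with $\widetilde{\CC}$ at all for expansion. Instead it proves a pointwise shrinking-neighborhoods lemma (Lemma~\ref{lma:shrinknbhd}): for each $x\in S^2$, one builds a \emph{clean}, simply connected neighborhood $U_\xi$ of the fiber $\xi=\pi^{-1}(x)$ containing at most one postcritical point (Lemma~\ref{lma:clean}), so that each component of $R^{-n}(U_\xi)$ is a single preimage class $\zeta$ surrounded by Shrinking-Lemma-small sets; hence its $\pi$-image is small. A finite subcover of such $V_x=\pi(U_\xi)$ then verifies the covering characterization of expansion from \cite[Proposition~6.3]{bonk:meyer:expanding}. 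The key idea you are missing is to apply the Shrinking Lemma to neighborhoods of fibers (the future tiles), not to pieces of the curve.
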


We rely on the following notion and fact.
An equivalence relation $\sim$ on the sphere $S^2$ is said to
be of \emph{Moore-type}, if
\begin{enumerate}
\item $\sim$ is not trivial, meaning that there are at least two
  distinct equivalence classes;
\item $\sim$ is closed (here we view the
relation $\sim$ as a subset of $S^2\times S^2$ equipped with
the product topology);
\item each equivalence class is a compact
connected set;
\item no equivalence class separates $S^2$. This means
  that for any equivalence class $[x]$ of $\sim$ the set $S^2
  \setminus [x]$ is connected.
\end{enumerate}
Moore's theorem is the fact that the quotient
$\widetilde{S}^2:= S^2/\!\!\sim $ endowed with
the quotient topology is homeomorphic to $S^2$ \cite[Theorem
IX.2.1']{whyburn:analytic_topology}.

Now let $f\colon S^2\to S^2$ be a Thurston map.
We call $\sim$ \emph{strongly $f$-invariant} if the image of any
equivalence class is an equivalence class, i.e.,
\begin{equation}
  \label{eq:def_strong_inv}
  f([x]) = [f(x)],
\end{equation}
for any $x\in S^2$. The purpose of this condition is explained
by the following, see
\cite[Corollary~13.8]{bonk:meyer:expanding} for a proof.

\begin{lemma}
  \label{lem:quotient_thurs}
 Let $f\colon S^2 \to S^2$ be a Thurston map and $\sim$ be a
  strongly $f$-invariant
  equivalence relation of Moore-type on $S^2$. Then there is a
  Thurston map $\widetilde{f}\colon \widetilde{S}^2\to
  \widetilde{S}^2$ such that
  \begin{equation*}
    \pi\circ f= \widetilde{f}\circ \pi.
  \end{equation*}
\end{lemma}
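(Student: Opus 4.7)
My plan is to define the candidate map, verify well-definedness and continuity, then check the properties of a branched covering, and finally establish postcritical finiteness. Use Moore's theorem to identify $\widetilde{S}^2$ with $S^2$ and define $\widetilde{f}\colon \widetilde{S}^2 \to \widetilde{S}^2$ by $\widetilde{f}([x]) := [f(x)]$. Strong invariance makes this well defined: if $x \sim y$, then $[f(x)] = f([x]) = f([y]) = [f(y)]$, so $f(x) \sim f(y)$. The identity $\widetilde{f}\circ \pi = \pi\circ f$ holds by construction, and continuity of $\widetilde{f}$ follows from the universal property of the quotient topology applied to the continuous map $\pi \circ f$.

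The second step is to show that $\widetilde{f}$ is a branched covering, which by Whyburn's theorem (quoted in Section~\ref{sec:branched-coverings}) reduces to openness, finiteness of fibers, and orientation preservation. For openness, given open $U \subset \widetilde{S}^2$, observe $\widetilde{f}(U) = \pi(f(\pi^{-1}(U)))$; the set $\pi^{-1}(U)$ is open and $\sim$-saturated, and strong invariance propagates saturation through $f$, so $f(\pi^{-1}(U))$ is open and saturated, and $\pi$-projects to an open set. For finiteness of fibers, pick $[y]\in \widetilde{S}^2$ and any $y_0 \in [y]$: every $[x] \in \widetilde{f}^{-1}([y])$ satisfies $f([x]) = [y]$, so it meets the finite set $f^{-1}(y_0)$, yielding $\#\widetilde{f}^{-1}([y]) \leq \deg f$. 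For orientation preservation, I would use that the monotone surjection $\pi\colon S^2 \to \widetilde{S}^2$ is cellular (each point-preimage is a compact connected non-separating subset of $S^2$, hence cell-like), so $\pi$ is a homotopy equivalence of degree $\pm 1$; then $\widetilde{f}\circ \pi = \pi \circ f$ yields $\deg \widetilde{f} = \deg f > 0$ for the appropriate orientation on $\widetilde{S}^2$.

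It remains to prove that $\widetilde{f}$ is postcritically finite. The set $\pi(\post(f))$ is finite and, since $f(\post(f))\subset \post(f)$, the commutation relation forces $\widetilde{f}(\pi(\post(f))) \subset \pi(\post(f))$, so it suffices to show $\crit(\widetilde{f}) \subset \pi(\crit(f))$, whence $\post(\widetilde{f}) \subset \pi(\post(f))$. This last inclusion is the main obstacle I expect: the quotient $\pi$ must not manufacture new ramification. The plan is to argue that for any class $[c]$ disjoint from $\crit(f)$, $f$ is a local homeomorphism on a saturated neighborhood of $[c]$ and maps $[c]$ homeomorphically onto $[f(c)]$; since both $[c]$ and $[f(c)]$ are compact, connected and non-separating, the $\pi$-collapse of this local picture produces a local homeomorphism of $\widetilde{f}$ at $\pi(c)$, so $\pi(c) \notin \crit(\widetilde{f})$. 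Making this lifting argument precise, while carefully tracking how $\pi$ interacts with the local branched model $z\mapsto z^d$ of $f$, is the one place where the full Moore-type hypotheses on $\sim$ are used in a geometrically essential way; the openness, well-definedness and finite-to-one arguments are, by contrast, formal consequences of strong $\sim$-invariance.
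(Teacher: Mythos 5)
The paper offers no internal proof of this lemma: it is quoted directly from Bonk--Meyer (Corollary~13.8 of \cite{bonk:meyer:expanding}), so there is nothing in the text to match your argument against line by line. Your reconstruction follows what is essentially the standard route: Moore's theorem to identify $\widetilde S^2$ with $S^2$, the Whyburn/Sto\"{\i}low characterization to recognize $\widetilde f$ as a branched covering via openness, finiteness of fibers and orientation, and then postcritical finiteness by showing $\crit(\widetilde f)\subset\pi(\crit f)$. The formal steps (well-definedness and continuity from strong invariance and the quotient topology, openness because strong invariance propagates $\sim$-saturation through $f$, finiteness of fibers because every class in $\widetilde f^{-1}([y])$ must meet $f^{-1}(y_0)$, and the degree computation from cellularity of $\pi$) are all sound.

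The one real gap is precisely where you flag the difficulty, and it is larger than your phrasing suggests. Knowing that $f$ is a local homeomorphism on a saturated neighborhood $U$ of a class $[c]$ disjoint from $\crit(f)$ does \emph{not} give that $f|_U$ is injective, nor that $f$ restricts to a homeomorphism $[c]\to[f(c)]$: a local homeomorphism on a connected open set can be many-to-one, and strong invariance only yields the surjection $f([c])=[f(c)]$. The missing ingredient is that Moore-type plus monotonicity of $\pi$ produce \emph{saturated simply connected} neighborhoods: for any small open disk $D\ni\pi(f(c))$ in $\widetilde S^2$, the set $V:=\pi^{-1}(D)$ is open, connected, saturated, and has connected complement $\pi^{-1}(\widetilde S^2\setminus D)$, hence is simply connected. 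Take $U$ to be the component of $f^{-1}(V)$ containing $[c]$; then $U$ is saturated, $f\colon U\to V$ is a \emph{proper} local homeomorphism onto a simply connected set (properness because $\partial U\subset f^{-1}(\partial V)$), hence a homeomorphism, and so $\widetilde f$ is a homeomorphism from $\pi(U)$ onto $D$. One also needs that for $D$ small the set $U$ avoids $\crit(f)$; this uses that only finitely many classes map onto $[f(c)]$, so the component of $f^{-1}([f(c)])$ through $[c]$ is $[c]$ itself and $U$ shrinks to $[c]$. Without this simply-connectedness-and-properness step, the assertion that $\widetilde f$ is unramified at $\pi(c)$ does not follow from the ``local picture'' alone.
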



The existence of $f$ follows essentially from the above in
conjunction with Moore's theorem.

\begin{lemma}
  \label{lem:sim_Moore_str_inv}
  There is a Thurston map $f\colon S^2\to S^2$ such that $f\circ
  \pi = \pi\circ R$.
\end{lemma}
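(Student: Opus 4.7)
The plan is to invoke Lemma~\ref{lem:quotient_thurs} with $R\colon\cbar\to\cbar$ viewed as a Thurston map on $\cbar\cong S^2$ and the equivalence relation $\sim$ obtained by collapsing the closure of each Fatou component. Two hypotheses have to be checked: that $\sim$ is of Moore type, and that $\sim$ is strongly $R$-invariant. The conclusion of the lemma then automatically yields a Thurston map $f$ on the quotient satisfying $f\circ\pi=\pi\circ R$, and Moore's theorem identifies the quotient with $S^2$.

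First I would verify the four Moore-type axioms. Nontriviality is clear, since singleton classes coming from Julia points away from every $\overline{\Omega}$ coexist with nondegenerate classes $\overline{\Omega}$. The equivalence classes are either singletons or closures of Fatou components; the former are trivially compact and connected, while the latter are closed Jordan disks, as $\mathcal{J}_R$ being a Sierpi\'nski carpet forces each Fatou component to be a Jordan domain whose closure is disjoint from the closures of all other Fatou components. By the Sch\"onflies theorem a closed Jordan disk in $S^2$ has connected complement, so no equivalence class separates $\cbar$. The subtle axiom is closedness of $\sim$ as a subset of $\cbar\times\cbar$: given $z_n\sim w_n$ with $(z_n,w_n)\to(z,w)$, I would argue that either $z_n=w_n$ for infinitely many $n$ (whence $z=w$), or for infinitely many $n$ both $z_n,w_n\in\overline{\Omega_n}$ for a Fatou component $\Omega_n$. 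Passing to a subsequence, either the $\Omega_n$ are eventually equal to a single $\Omega$, so $z,w\in\overline{\Omega}$ by compactness, or they are pairwise distinct and Lemma~\ref{lem:comp_F_small} forces $\diam(\Omega_n)\to 0$, giving $z=w$. This closedness step is the main obstacle and is the only place where the carpet hypothesis enters essentially.

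Next I would verify the strong invariance identity $R([z])=[R(z)]$. If $z\in\overline{\Omega}$ for a Fatou component $\Omega$, then continuity and surjectivity of $R$ on Fatou components give $R(\overline{\Omega})=\overline{R(\Omega)}$, while $R(\Omega)$ is again a Fatou component, so both $R([z])$ and $[R(z)]$ equal $\overline{R(\Omega)}$. If $[z]=\{z\}$ then $z\in\mathcal{J}_R$ and $z$ lies on no Fatou boundary; since $\mathcal{J}_R$ is completely invariant and $R^{-1}(\overline{\Omega'})$ is the union of the closures of the Fatou components mapping onto $\Omega'$, the point $R(z)$ likewise avoids every Fatou boundary, so $[R(z)]=\{R(z)\}=R([z])$.

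With both hypotheses of Lemma~\ref{lem:quotient_thurs} verified, Moore's theorem identifies $\cbar/\!\sim$ with $S^2$ via a homeomorphism through which $\pi$ becomes a continuous monotone map $\pi\colon\cbar\to S^2$, and Lemma~\ref{lem:quotient_thurs} produces the desired Thurston map $f\colon S^2\to S^2$ with $f\circ\pi=\pi\circ R$.
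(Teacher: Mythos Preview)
Your proposal is correct and follows essentially the same route as the paper: verify that $\sim$ is of Moore type (nontriviality, compact connected non-separating classes, and closedness via Lemma~\ref{lem:comp_F_small}) and strongly $R$-invariant, then apply Lemma~\ref{lem:quotient_thurs} together with Moore's theorem. The paper's proof is organized in the same way and uses the same ingredients; your treatment of the closedness axiom and of the two cases in the strong invariance check is slightly more explicit, but there is no substantive difference.
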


\begin{proof}
  According to Lemma~\ref{lem:quotient_thurs}.
  we
  need to show that $\sim$ is of Moore-type and strongly
  $R$-invariant.

  Clearly $\sim$ is not trivial. Furthermore each equivalence
  class is either a single point or a closed Jordan domain,
  thus compact, connected, and it does not separate $\cbar$.

  To prove that $\sim$ is closed it suffices to show that given
  two convergent sequences $(x_n)_{n\in\N}$ and $(y_n)_{n\in
    \N}$ in $\cbar$ with $x_n\sim y_n$ for all $n\in \N$ it
  follows that $\lim x_n \sim \lim y_n$. This is clear in the
  case when for sufficiently large $n\in \N$ the points $x_n$ are
  contained in some fixed equivalence class, since each
  equivalence
  class is compact. Otherwise, we may assume that for distinct
  $n,m\in \N$ the points $x_n$ and $x_m$ are contained in
  distinct equivalence classes. In this case
  Lemma~\ref{lem:comp_F_small} shows that $\lim x_n = \lim
  y_n$. Thus $\sim$ is closed.
  We conclude that $\sim$ is of Moore-type. Thus $\cbar/\sim$ is
  homeomorphic to $S^2$, so we may identify the two spaces.

  Note that if $x\in \cbar$ is not contained in the closure of
  a component of the Fatou
  set, the same it true for $R(x)$. Furthermore $R$ maps the
  closure of each component of the Fatou set to the closure of a
  component of the Fatou set. Thus $\sim$ is strongly
  $R$-invariant. Hence $f\colon S^2\to S^2$ is a Thurston map
  that satisfies $f\circ \pi = \pi \circ R$ by
  Lemma~\ref{lem:quotient_thurs}.
\end{proof}

So it remains to show that $f$ is expanding. We will need some
preparation.



The set $F=\pi(\mathcal{F}_R)$ is countable.
Let us say that an open set $U$ in $\cbar$ is {\it clean} if
$\partial U \cap \overline{\Omega}=\emptyset$ for all Fatou
components $\Omega$. If $U$ is a clean neighborhood it
is saturated with respect to $\sim$ and the
restriction $\pi|_{\partial U}$ is a
homeomorphisms onto its image.

\begin{lemma}\label{lma:clean} Let $\xi$ be an equivalence class. For any open subset $U$ containing $\xi$, there exists a clean
open set $V$ with boundary a simple closed curve such that $\xi\subset V\subset U$.\end{lemma}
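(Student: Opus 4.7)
The plan is to reduce the problem to finding a Jordan domain neighborhood in the quotient sphere. By the proof of Lemma~\ref{lem:sim_Moore_str_inv}, the relation $\sim$ is of Moore-type, so $\cbar/\!\sim$ is homeomorphic to $S^2$ and we obtain a continuous monotone closed quotient map $\pi\colon\cbar\to S^2$; set $p=\pi(\xi)$. Since $\cbar\setminus U$ is compact and $\pi$ is closed, $\pi(\cbar\setminus U)$ is closed, so $U^*:=S^2\setminus \pi(\cbar\setminus U)$ is an open neighborhood of $p$ in $S^2$ whose $\pi$-preimage is contained in $U$.

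I would first take a Jordan domain $W_0\ni p$ with $\overline{W_0}\subset U^*$, which exists since $S^2$ is a topological sphere. The only way the pullback $\pi^{-1}(W_0)$ can fail to be clean is if $\partial W_0$ passes through a point of the countable set $F=\pi(\mathcal{F}_R)$, in which case an entire Fatou-component closure gets inserted into $\pi^{-1}(\partial W_0)$. To avoid this, I would apply the Jordan--Schoenflies theorem to find an annular neighborhood of $\partial W_0$ inside $U^*$, producing an uncountable pairwise disjoint family of Jordan curves parallel to $\partial W_0$. Since $F$ is countable, all but countably many of these curves avoid $F$; I pick one close to $\partial W_0$, call it $\partial W$, that still encloses $p$ inside $U^*$, and let $W$ be the resulting Jordan domain containing $p$.

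The desired clean neighborhood is then $V:=\pi^{-1}(W)$. Openness is immediate from continuity of $\pi$; the inclusions $\xi=\pi^{-1}(p)\subset V$ and $V\subset U$ follow at once from the construction of $U^*$; and cleanness holds because every Fatou-component closure $\overline{\Omega}$ maps to a single point of $F$ that lies off $\partial W$, so $\overline{\Omega}\cap \pi^{-1}(\partial W)=\emptyset$. Finally, $\partial V=\pi^{-1}(\partial W)$ is a Jordan curve because all fibers of $\pi$ over $\partial W$ are singletons (as $\partial W$ avoids $F$), so $\pi$ restricts to a continuous bijection from the compact set $\pi^{-1}(\partial W)$ onto the Hausdorff Jordan curve $\partial W$, which is automatically a homeomorphism. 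The main obstacle is the perturbation step, but it reduces to the elementary observation that a countable set meets at most countably many curves in an uncountable pairwise disjoint family; once that is granted, all the remaining verifications are formal.
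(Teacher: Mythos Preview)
Your proof is correct and follows essentially the same approach as the paper: both arguments pass to the quotient sphere, exploit the countability of $F=\pi(\mathcal{F}_R)$ to select from an uncountable pairwise disjoint family of Jordan curves one that misses $F$, and then pull back by $\pi$ to obtain the desired clean neighborhood. The only cosmetic difference is that the paper uses concentric metric circles $S_r$ about $\pi(\xi)$ as its uncountable family, whereas you invoke Jordan--Schoenflies to foliate a collar of $\partial W_0$; the underlying idea and all subsequent verifications are the same.
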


\begin{proof}
Identify  $\pi(\cbar)=S^2$ with the Euclidean  sphere.  Since $S_r=\{x\in S^2,\ d(\pi(\xi),x)= r\}$, $r>0$, is an uncountable family
of pairwise disjoint sets and $F(=\pi(\mathcal{F}_R))$ is
countable, we may find $(r_n)$ tending to $0$ such that
$S_{r_n}\cap F =\emptyset$
The set $V_n= \pi^{-1}(\{x\in S^2,\ d(\pi(\xi),x)<r_n\})$ is open and clean by construction. Furthermore,
$$\cap V_n = \pi^{-1}(\cap   \{x\in S^2,\ d(\pi(\xi),x)<r_n\}   )=\xi$$ holds
so that for $n$ large enough, one has $\xi\subset V_n\subset U$.
\end{proof}




\begin{lemma}\label{lma:shrinknbhd}
For any $x\in S^2$
there is a neighborhood $V\subset S^2$ of $x$ such that the maximal
diameter of any component of $f^{-n}(V)$ tends to $0$ as $n\to \infty$.
\end{lemma}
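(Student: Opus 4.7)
The plan is to construct $V$ as the $\pi$-image of a saturated clean neighborhood $V' \subset \cbar$ of the fibre $\xi := \pi^{-1}(x)$ furnished by Lemma~\ref{lma:clean}, and analyze preimages under $R^n$ via the Shrinking Lemma. I would choose $V'$ so small that $V' \cap \post(R) \subseteq \xi$ and so that its Jordan boundary avoids every Fatou component closure. Since $V'$ is saturated, $V := \pi(V')$ is open in $S^2$ with $\pi^{-1}(V) = V'$, and monotonicity of $\pi$ produces a natural bijection between components of $f^{-n}(V)$ and of $R^{-n}(V')$ via $W \leftrightarrow \pi^{-1}(W)$. Because $\pi$ is uniformly continuous between the compact metric spaces $\cbar$ and $S^2$, proving $\diam_{\cbar}(L) \to 0$ uniformly over components $L$ of $R^{-n}(V')$ would immediately yield $\diam_{S^2}(\pi(L)) \to 0$; the subtlety arises when $\xi$ meets $\post_{\mathcal{F}}(R)$, where the $\cbar$-diameter need not shrink.

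If $\xi \cap \post_{\mathcal{F}}(R) = \emptyset$, then $\overline{V'} \subset \OO$ and $V' \cap \mathcal{J}_R \neq \emptyset$ (either $\xi \subset \mathcal{J}_R$, or $\partial \Omega \subset \mathcal{J}_R$ when $\xi = \overline\Omega$). The Shrinking Lemma~\ref{lem:shrink} applied to $\overline{V'}$ gives $\diam_\OO(L) \leq C\,\diam_\OO(\overline{V'})\,\lambda^{-n}$ uniformly for components $L$ of $R^{-n}(\overline{V'})$. As the orbifold density $\rho$ is bounded below by a positive constant on $\OO$, this forces $\diam_{\cbar}(L) \to 0$ uniformly, hence $\diam_{S^2}(\pi(L)) \to 0$.

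The delicate case is $\xi = \overline{\Omega}$ with $\Omega \cap \post_{\mathcal{F}}(R) \neq \emptyset$, for which $\overline{V'} \not\subset \OO$ and the Shrinking Lemma does not apply to $V'$ directly. I would decompose $V' = \overline{\Omega} \cup A$ with $A := V' \setminus \overline{\Omega}$ an open annular neighborhood of $\partial\Omega$ lying in $\cbar \setminus \Omega$; since Fatou-type postcritical points lie in the interior of Fatou components and not on $\partial\Omega$, and $V' \cap \post(R) \subseteq \overline{\Omega}$, one has $\overline A \subset \OO$. The Shrinking Lemma applied to $\overline A$ then yields $\diam_{\cbar}(\widetilde L) \to 0$ uniformly for components $\widetilde L$ of $R^{-n}(\overline A)$. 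The plan is then to show that every component $L$ of $R^{-n}(V')$ decomposes as $L = \overline{\Omega'_L} \cup L_A$, where $\overline{\Omega'_L}$ is the unique preimage Fatou component of $\Omega$ under $R^n$ contained in $L$ and $L_A$ is a single annular component of $R^{-n}(A)$ surrounding it. Granting this, $\pi(L) = \{y_L\} \cup \pi(L_A)$ with $y_L := \pi(\overline{\Omega'_L}) \in \overline{\pi(L_A)}$, so $\diam_{S^2}(\pi(L)) \leq \diam_{S^2}(\pi(L_A)) \to 0$ by uniform continuity of $\pi$.

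The main obstacle is establishing this ``one Fatou component per $L$'' structure. When $\overline\Omega$ contains a unique Fatou-type postcritical point $p$, shrinking $V'$ further so that $V' \cap \post(R) = \{p\}$ ensures $V'$ carries at most one critical value of $R^n$; Lemma~\ref{lem:pre_1post} then shows each component $L$ of $R^{-n}(V')$ is a Jordan domain on which $R^n$ is conformally $z \mapsto z^{d_L}$, forcing a unique preimage $c' \in L$ of $p$, hence a unique preimage Fatou component $\overline{\Omega'_L} \ni c'$ and annularity of $L \setminus \overline{\Omega'_L}$. When $\overline\Omega$ carries several Fatou-type postcritical points, Lemma~\ref{lem:pre_1post} does not apply verbatim and this will be the principal difficulty; I would handle it either by analyzing $R^n : L \to V'$ as a branched cover via Riemann--Hurwitz to bound the number of preimage Fatou components inside $L$ and the mutual $S^2$-distance of their $\pi$-images by a uniform multiple of $\max \diam_{\cbar}(\widetilde L)$, or by reducing to the single-critical-value situation through a sufficient iterate of $R$.
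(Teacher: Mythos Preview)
Your overall architecture---choose a clean saturated neighbourhood $V'$ of $\xi=\pi^{-1}(x)$, set $V=\pi(V')$, and control components of $f^{-n}(V)$ via those of $R^{-n}(V')$---matches the paper's. The execution in Case~2, however, has a genuine gap. You invoke Lemma~\ref{lem:shrink} for the closed annulus $\overline A = \overline{V'\setminus\overline\Omega}$, but that lemma carries the hypothesis $\diam_\OO K\le\delta$, which fails here since $\overline A$ encircles a Fatou component of fixed size; more to the point, its proof requires enclosing $K$ in a simply connected $D\subset\OO$ with at most one postcritical point, which is impossible for an annulus surrounding a Fatou-type postcritical point. In fact the conclusion you assert is false: whenever $n$ is a multiple of the period of $\Omega$ there is a component $L$ of $R^{-n}(V')$ with $\Omega'_L=\Omega$, and then the annular piece $L_A=L\setminus\overline\Omega$ satisfies $\diam_{\cbar}(L_A)\ge\diam_{\cbar}(\partial\Omega)>0$. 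So $\diam_{\cbar}(\widetilde L)\not\to 0$, and you cannot deduce $\diam_{S^2}(\pi(L_A))\to 0$ from uniform continuity of $\pi$. (The same diameter obstruction appears in Case~1 when $\xi=\overline\Omega$ has no Fatou-type postcritical point; there your conclusion happens to be correct, because $V'$ is simply connected with no critical values and lies in the expanding region $W$, but Lemma~\ref{lem:shrink} as stated still does not apply.)

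The paper avoids this by never applying the Shrinking Lemma to a large set. It covers $\xi\cap\mathcal{J}_R$ (which equals $\partial\Omega$ in your Case~2) by small simply connected sets $U_z$, each containing at most one Julia-type postcritical point, and applies the Shrinking Lemma to each $U_z$ separately. One then checks that every component $W$ of $R^{-n}(U_\xi)$ is the union of a single preimage $\zeta$ of $\xi$ together with components of the $R^{-n}(U_z)$ that all meet $\zeta$ and have diameter below the chosen $\delta$; since $\pi(\zeta)$ is a single point, this yields $\diam_{S^2}\pi(W)\le\epsilon$. Incidentally, your flagged ``principal difficulty''---several Fatou-type postcritical points in one $\overline\Omega$---never arises: the Boettcher conjugacy of the first-return map to $z\mapsto z^d$ forces every non-central point of a periodic Fatou component to have infinite forward orbit, so each Fatou component of a postcritically finite map carries at most one postcritical point and Lemma~\ref{lem:pre_1post} applies directly.
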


\begin{proof}
By the shrinking lemma, one may find an open simply connected
neighborhood  $U_z\subset \cbar$ for each $z\in \mathcal{J}_R$
such that
the  components of $R^{-n}(U_z)$ are as small as wanted provided $n$ is large enough. Since $R$ is postcritically finite,
either $z\notin\post(R)$ and so we may choose $U_z$ disjoint from
$\post(R)$ or $z\in\post(R)$ and so we may assume $U_z\cap
\post(R)=\{z\}$.

Let $x\in S^2$ be arbitrary and set $\xi=\pi^{-1}(\{x\})$ so that $\xi\subset \cbar$ is an equivalence class.
Let $U_\xi'$ be the union of $\xi$ and the union of all $U_z$'s for $z\in \xi\cap \mathcal{J}_R$
(note that if $\xi=\{z\}$ is a singleton then $U_\xi'=U_z$).
According to Lemma \ref{lma:clean}, we may find a clean and simply connected open set $U_\xi$
such that $\xi\subset U_\xi\subset U_\xi'$.
Since $\mathcal{J}_R$ is a carpet, $U_\xi$ contains at most one point from $\post(R)$ which is contained
in $\xi$, see Section~\ref{sec:sierp-carp-julia}.

Endow $S^2$ with a metric compatible with its topology and let $\ep>0$; the uniform continuity of $\pi$
implies the existence of $\de>0$ such that sets of diameter less than $\de$ are mapped under $\pi$ to sets
of diameter at most $\ep/2$. By choosing $n$ large enough, we
can ensure that each of the components $W$  of $R^{-n}(U_\xi)$ will be covered by the union
of a single preimage $\zeta$ of $\xi$ and of components of
$R^{-n}(U_z)$, $z\in\xi$, each of which intersects $\zeta$ and has diameter at most $\de$.
Thus $\diam\, \pi(W)\le \ep$.

Now define $V:= \pi(U_\xi)\subset S^2$. Since $U_\xi$ is
saturated and
open, it follows that $V$ is an open neighborhood of $x\in
S^2$. Let $V'\subset S^2$ be a component of $f^{-n}(V)$. Since
$\pi$ is monotone, it follows that $\pi^{-1}(V')\subset \cbar$ is
connected. This implies in conjunction with the identity $f\circ \pi = \pi
\circ R$ that $\pi^{-1}(V')$ is contained in a
component of $R^{-n}(U_\xi)$. Thus $\diam (V')\leq \epsilon$ as
desired.
\end{proof}

\begin{proof}[Proof of Theorem \ref{thm:quotient}]
According to \cite[Proposition~6.3]{bonk:meyer:expanding},
the map $f$ is an expanding Thurston map if there is a finite covering $\VV$ of $S^2$ by connected open sets
such that $(\VV_n)_n$ defines
a basis for the topology of $S^2$ where
$\VV_n$ denotes the set of connected components of $f^{-n}(V)$ for  $V\in\VV$.
 Apply  Lemma \ref{lma:shrinknbhd} and let $\VV$ be a finite subcover. The theorem follows.\end{proof}

\section{Lifts of Jordan curves}
\label{sec:lifts-jordan-curve}

We are now ready to prove Theorem~\ref{thm:main}.
Let
$R\colon \cbar \to \cbar$ be a postcritically finite rational
map with Sierpi\'{n}ski carpet Julia set. From Theorem
\ref{thm:quotient} we obtain the expanding Thurston map $f\colon
S^2\to S^2$ and the monotone map $\pi\colon \cbar \to
S^2$ such that $f\circ \pi = \pi \circ R$. As in the last
section we let $F:= \pi(\mathcal{F}_R)$.

The idea to construct the $R^n$-invariant Jordan curve
$\Gamma\subset \cbar$ is to start with an $f^n$-invariant Jordan
curve $\CC\subset S^2$ given by
Theorem~\ref{thm:invC_subdivSector} and extract an
$R^n$-invariant Jordan curve in $\pi^{-1}(\CC)$. By the remark at the end of Section~\ref{sec:invar-jord-curv}, we
may take further iterates $f^{nk}$ (and correspondingly
$R^{nk}$) if convenient. To simplify the
discussion, we assume in the following however that $n=k=1$.

\begin{theorem}
  \label{thm:lifts}
  With the above notation, let $\CC\subset S^2$ be an
  $f$-invariant Jordan with $\post(f) \subset \CC$, such that at
  each Fatou-type
  postcritical point the sectors between $0$-edges are
  subdivided by $1$-edges.
  Then there exists an $R$-invariant Jordan curve $\G\subset
  \cbar$ satisfying $\post(R) \subset \Gamma$ as well as
  $\pi(\Gamma)= \CC$.
\end{theorem}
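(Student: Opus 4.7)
My plan is to extract $\Gamma$ from $\pi^{-1}(\CC)$ by rethreading each closed Fatou component $\overline{\Omega}$ appearing inside $\pi^{-1}(\CC)$ with a pair of internal rays. Let $F:=\pi(\mathcal{F}_R)$ (which is countable) and let $F_{\CC}$ denote the countable family of Fatou components $\Omega\subset\cbar$ with $\pi(\overline{\Omega})\in\CC$. For each $\Omega\in F_{\CC}$, set $p=\pi(\overline{\Omega})$. The first and most delicate step is to show that each of the two local branches of $\CC$ at $p$ lifts to a curve in $\mathcal{J}_R\setminus\Omega$ landing at a unique point of $\partial\Omega$; more precisely, if $\beta$ is one of the two arcs in $\CC\setminus\{p\}$ near $p$, then $\pi^{-1}(\beta')\cap(\mathcal{J}_R\setminus\Omega)$ converges to a single point on $\partial\Omega$ as $\beta'$ shrinks to $p$ along $\beta$. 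By the $f$-invariance of $\CC$, the branch $\beta$ is a nested intersection of $n$-edges $e_n=f^{-n}(E)$ for a fixed $0$-edge $E$, and applying Lemma~\ref{lem:shrink} in the orbifold metric to a covering of $\pi^{-1}(E)$ by small pieces forces the component of $R^{-n}(\pi^{-1}(E))$ meeting $\overline{\Omega}$ to collapse toward $\overline{\Omega}$ at the geometric rate $O(\lambda^{-n})$. This yields two attachment points $a_\Omega,b_\Omega\in\partial\Omega$.

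With the attachment points available, I use the B\"ottcher coordinate $\eta_\Omega:\overline{\D}\to\overline{\Omega}$, continuous up to the boundary since $\partial\Omega$ is a Jordan curve, and set $\sigma_a=\eta_\Omega^{-1}(a_\Omega)$, $\sigma_b=\eta_\Omega^{-1}(b_\Omega)$. Define $\gamma_\Omega:=\eta_\Omega([0,1]\sigma_a)\cup\eta_\Omega([0,1]\sigma_b)$, the union of two internal rays meeting at the B\"ottcher center $\eta_\Omega(0)$. For an $\Omega\in F_{\CC}$ whose image $p$ is a Fatou-type postcritical point of $f$, the sector-subdivision hypothesis ensures that the two $0$-edges at $p$ remain locally distinct once we subdivide by $1$-edges, so their lifts land at distinct angles on $\partial\Omega$; for the remaining $\Omega\in F_{\CC}$, $p$ lies in the interior of a $0$-edge and distinctness of $a_\Omega,b_\Omega$ is automatic. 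Either way, $\gamma_\Omega$ is an honest arc. I then put
\begin{equation*}
  \Gamma \;:=\; \overline{\pi^{-1}(\CC\setminus F)}\;\cup\;\bigcup_{\Omega\in F_{\CC}}\gamma_\Omega,
\end{equation*}
and verify it is a Jordan curve by parameterizing it via an abstract topological circle obtained from $\CC$ by blowing up each $x\in\CC\cap F$ into an interval mapped homeomorphically onto the corresponding $\gamma_\Omega$: continuity reduces to the attachment claim and injectivity to the disjointness of Fatou closures combined with Lemma~\ref{lem:comp_F_small} to control accumulations.

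The remaining verifications are brief. First, $\pi(\Gamma)=\CC$ is immediate because $\pi$ collapses each $\gamma_\Omega$ to a point of $\CC$. Second, since $\mathcal{J}_R$ is a Sierpi\'nski carpet it has no local cutpoints, so $\crit(R)\subset\mathcal{F}_R$ and hence $\post(R)\subset\mathcal{F}_R$; each postcritical point is a B\"ottcher center $\eta_\Omega(0)$ for a unique $\Omega\in F_{\CC}$, and lies on both rays composing $\gamma_\Omega$, proving $\post(R)\subset\Gamma$. Third, for $R$-invariance: on $\pi^{-1}(\CC\setminus F)$, combining $f^{-1}(F)=F$ (which follows from $R^{-1}(\mathcal{F}_R)=\mathcal{F}_R$) with $f(\CC)\subset\CC$ yields $R(\pi^{-1}(\CC\setminus F))\subset\pi^{-1}(\CC\setminus F)$; on each $\gamma_\Omega$, the conjugacy $R\circ\eta_\Omega(z)=\eta_{R(\Omega)}(z^{d_\Omega})$ together with $R(\{a_\Omega,b_\Omega\})\subset\{a_{R(\Omega)},b_{R(\Omega)}\}$---which holds because $f$-invariance of $\CC$ maps branches of $\CC$ at $p$ to branches at $f(p)$---gives $R(\gamma_\Omega)\subset\gamma_{R(\Omega)}$.

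The principal obstacle is the attachment claim of Step~1. The $n$-edges shrink in the $S^2$ metric by the expansion of $f$, but their $\pi$-preimages always contain $\overline{\Omega}$, so one must extract a separate shrinking rate for the part of the lift lying in $\cbar\setminus\overline{\Omega}$. Lemma~\ref{lem:shrink} converts the $S^2$-expansion of $f$ into geometric decay in $\cbar$, but its application is subtle because $\pi^{-1}$ of a single $n$-edge may spread across many Fatou components and hence have large orbifold diameter; a cover-and-iterate argument is needed. Once landing-at-a-point is proved, the sector-subdivision hypothesis contributes, at each Fatou-type postcritical flower, the combinatorial guarantee that the two landing points on $\partial\Omega$ are distinct, so that $\Gamma$ is a genuine Jordan curve rather than a pinched one.
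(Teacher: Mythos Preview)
Your overall strategy coincides with the paper's: replace each $\overline{\Omega}\subset\pi^{-1}(\CC)$ by two internal rays joining the center to the landing points $a_\Omega,b_\Omega\in\partial\Omega$ of the adjacent lifted arcs, and your landing argument via the Shrinking Lemma is essentially the paper's Lemma~\ref{lma:invgerm} and Corollary~\ref{cor:xinF_lifts}. Two of your claims, however, are unjustified.

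The serious gap is the assertion that when $p=\pi(\overline{\Omega})$ lies in the interior of a $0$-edge, distinctness $a_\Omega\neq b_\Omega$ is ``automatic.'' Nothing in the landing argument excludes both accesses converging to the same boundary point, and there is no direct topological obstruction: one can check that $\cbar\setminus\pi^{-1}(\CC)$ may still have exactly two components in such a configuration. The paper's Lemma~\ref{lma:liftg} proves distinctness for \emph{every} $x\in F\cap\CC$ by first treating a periodic Fatou-type postcritical $p$---where the extra $1$-edges supplied by the sector-subdivision hypothesis force a contradiction with local injectivity of $R$ on $\partial\Omega_p$ if the two landing points agreed---and then reducing an arbitrary $x\in F\cap\CC$ to this case: since every Fatou component is eventually periodic, some $f^k(x)$ is a periodic Fatou-type postcritical point, and the $(k{+}1)$-edges incident to $x$ inherit the sector subdivision, so the same contradiction argument applies at $x$. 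Your dichotomy misses that the hypothesis is needed, through this iteration, at \emph{all} points of $F\cap\CC$, not only at the Fatou-type postcritical ones; without it $\Gamma$ could be pinched.

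A secondary error: you claim $\crit(R)\subset\mathcal{F}_R$ because a Sierpi\'nski carpet has no local cutpoints, and conclude $\post(R)\subset\mathcal{F}_R$. The inference fails---a $d$-fold branched cover of a carpet neighbourhood is again locally carpet-like and need not produce a cutpoint---and subhyperbolic carpet examples with $\post_{\mathcal{J}}(R)\neq\emptyset$ do occur. The desired conclusion $\post(R)\subset\Gamma$ is nonetheless true: any $q\in\post_{\mathcal{J}}(R)$ avoids all Fatou boundaries (cf.\ Section~\ref{sec:sierp-carp-julia}), so $\pi^{-1}(\pi(q))=\{q\}$ with $\pi(q)\in\CC\setminus F$, whence $q\in\overline{\pi^{-1}(\CC\setminus F)}\subset\Gamma$.
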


Note that if $\CC\subset S^2$ is any Jordan curve, there is in
general no Jordan curve $\Gamma\subset \cbar$ with
$\pi(\Gamma)= \CC$.

\begin{ex}
  \label{ex:no_Jordan_in_lift}
  Consider the closed unit disk $\overline{\D}\subset \C$. Let
  $\gamma, \gamma'\subset \C\setminus \overline{\D}$ be two
  disjoint arcs with the following properties. The arc $\gamma$
  starts at the point $-2\in \R\subset \C$ and winds
  infinitely often around $\overline{\D}$, so that $\gamma$
  accumulates on $\partial \D$. Similarly $\gamma'$ starts at
  $2\in \R\subset \C$ and winds infinitely often around
  $\overline{\D}$ so that it accumulates on $\partial \D$.

  Collapse the set $\overline{\D}$ to a point. More precisely, we
  consider the equivalence relation on $\C$ where
  $\overline{\D}$ is an equivalence class, all other equivalence
  classes are of the form $\{z\}$ where $z\in \C\setminus
  \overline{\D}$. Let $\pi\colon \C\to \C/\!\sim$ be the
  quotient map. It is easy to see that $\pi(\gamma\cup
  \overline{\D} \cup \gamma')$ is an arc, yet there is no Jordan
  arc contained in $\gamma\cup \overline{\D}\cup \gamma'$
  connecting $-2$ and $2$.
\end{ex}

We start with several lemmas.

\begin{lemma}
  \label{lem:pre_Jordan}
  Let $\CC\subset S^2$ be a Jordan curve. Then $\cbar\setminus
  \pi^{-1}(\CC)$ has exactly two components.
\end{lemma}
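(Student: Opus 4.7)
The plan is to leverage the monotonicity of $\pi$, which was already recorded in Section~\ref{sec:expanding-quotients}: because every fiber of $\pi$ is connected, the preimage under $\pi$ of any connected subset of $S^2$ is connected (by \cite[VIII.2.2]{whyburn:analytic_topology}).

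First, by the Jordan curve theorem, $S^2\setminus\CC$ decomposes as a disjoint union of two nonempty open connected sets $U_1$ and $U_2$. Taking preimages,
\[
\cbar\setminus\pi^{-1}(\CC)\;=\;\pi^{-1}(S^2\setminus\CC)\;=\;\pi^{-1}(U_1)\,\sqcup\,\pi^{-1}(U_2),
\]
the union being disjoint because $U_1\cap U_2=\emptyset$. Since $\pi$ is continuous, both $\pi^{-1}(U_i)$ are open in $\cbar$; since $\pi$ is monotone and $U_i$ is connected, $\pi^{-1}(U_i)$ is connected; and both are nonempty because $\pi$ is surjective.

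Therefore $\cbar\setminus\pi^{-1}(\CC)$ is expressed as a disjoint union of two nonempty open connected sets, so these are precisely its connected components and there are exactly two of them. There is no real obstacle here—the statement is an immediate corollary of the monotonicity of $\pi$ combined with the Jordan curve theorem, and the only point worth emphasizing is why the two pieces $\pi^{-1}(U_i)$ do not further disconnect, which is exactly what monotonicity provides.
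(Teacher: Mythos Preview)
Your argument is correct and follows essentially the same route as the paper: both use monotonicity of $\pi$ to see that the preimages of the two Jordan domains are connected, and conclude these are the two components of $\cbar\setminus\pi^{-1}(\CC)$. Your version is in fact slightly cleaner, since you explicitly note that the $\pi^{-1}(U_i)$ are open and nonempty, which immediately identifies them as the connected components.
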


\begin{proof}
Let $U_1$ and $U_2$ be the two components of $S^2\setminus
\CC$.
Since $\pi$ is monotone, the sets $V_1:= \pi^{-1}(U_1)\subset
\cbar$ and $V_2:= \pi^{-1}(U_2)\subset \cbar$ are connected. If
$V_1\cup V_2$ would be connected, it would follow that $U_1
\cup U_2 =\pi(V_1 \cup V_2)$ is connected, which is a
contradiction.
\end{proof}

To avoid problems as in Example~\ref{ex:no_Jordan_in_lift}, we
will use that $\CC$ is $f$-invariant and the
shrinking lemma of the orbifold metric for $R$ in an essential
way.

In the next lemma we consider a Jordan arc $c\colon [0,1] \to
S^2$ 
starting at $x= c(0)$ that is invariant for $f$
in the following sense. We have $f(x)= x$ and there is an $s\in
(0,1)$ such that $f(c([0,s]))= c([0,1])$.

\begin{lemma}
  \label{lma:invgerm}
  Let $c:[0,1]\to S^2$ be a Jordan arc that is invariant for $f$
  as 
  above and assume that $c(0,1]\cap \post(f)=\emty$.
  Then $\pi^{-1}(c(0,1])$ has a single accumulation
  point which is a fixed point for $R$.

  Furthermore if $c':[0,1]\to S^2$ is a preimage of $c$ by some
  other iterate $f^\ell$, i.e., $c'$ is another Jordan arc such
  that 
  $f^\ell(c') =c$, then $\pi^{-1}(c'(0,1])\subset \cbar$ has a
  single accumulation point as well.
\end{lemma}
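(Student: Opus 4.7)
The plan is to decompose the arc $c$ into pieces whose lifts $E_n\subset\cbar$ shrink exponentially in the orbifold metric (via Lemma~\ref{lem:shrink}) and to read off a Cauchy convergence of endpoints to the desired fixed point; the same machinery will then handle the preimage arc $c'$.

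First I would exploit the invariance: $f(x)=x$ together with $f(c([0,s]))=c([0,1])$ furnishes an inverse branch $g:=(f|_{c([0,s])})^{-1}$ fixing $x$. Set $a_0=1$ and $c(a_n):=g(c(a_{n-1}))$; since $f$ is expanding, the arcs $c([0,a_n])=g^n(c([0,1]))$ shrink to $\{x\}$ in $S^2$, so $a_n\searrow 0$, $c(0,1]=\bigcup_{n\ge 1}c([a_n,a_{n-1}])$, and $f^{n-1}$ maps $c([a_n,a_{n-1}])$ homeomorphically onto the reference arc $c([s,1])$.

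Now set $E_n:=\pi^{-1}(c([a_n,a_{n-1}]))$. Each $E_n$ is compact, connected (by monotonicity of $\pi$), and lies in $\OO=\cbar\setminus\post_{\mathcal F}(R)$ because $c(0,1]\cap\post(f)=\emptyset$ and $\pi$ sends postcritical points of $R$ to postcritical points of $f$. Cover the compact set $E_1\subset\OO$ by finitely many, say $N$, compact connected pieces $K_1,\dots,K_N$ of orbifold diameter at most the threshold $\delta$ of Lemma~\ref{lem:shrink}, whose nerve is connected; Lemma~\ref{lem:comp_F_small} is essential here, as it confines to a finite number the ``large'' Fatou closures that may be attached to $c([s,1])$ and so makes such a finite cover possible. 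The Shrinking Lemma bounds every component of $R^{-(n-1)}(K_i)$ by $C\delta/\lambda^{n-1}$; combined with $R^{n-1}(E_n)\subseteq E_1\subset\bigcup_iK_i$ and the connectedness of $E_n$, a chain argument through the nerve of $\{K_i\}$ yields
\[
\diam_\OO(E_n)\le NC\delta/\lambda^{n-1}\to 0\quad\text{as }n\to\infty.
\]
For $n$ large $E_n$ sits in a fixed compact subset of $\OO$ (a neighborhood of $\pi^{-1}(x)$ disjoint from the remaining points of $\post_{\mathcal F}(R)$), so orbifold and spherical metrics are comparable there and $\diam(E_n)\to 0$ spherically as well.

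Finally, the compacts $P_n:=\pi^{-1}(c(a_n))\subset E_n\cap E_{n+1}$ satisfy $d_{\mathrm{Haus}}(P_n,P_{n+1})\le\diam(E_{n+1})$, an exponentially summable bound, so $(P_n)$ is Cauchy in the Hausdorff distance; since $\diam(P_n)\le\diam(E_n)\to 0$, the limit is a single point $p\in\pi^{-1}(x)$. Hausdorff convergence $E_n\to\{p\}$ then shows $p$ is the unique accumulation point of $\pi^{-1}(c(0,1])=\bigcup_nE_n$, and the inclusion $R(P_{n+1})\subseteq P_n$ (which follows from $f(c(a_{n+1}))=c(a_n)$) passes to the limit by continuity of $R$ to give $R(p)=p$. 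For the second statement, the bijection $f^\ell|_{c'}\colon c'\to c$ pulls the decomposition back to $c'=\bigcup_n c'([b_n,b_{n-1}])$ with $f^{n+\ell-1}$ mapping $c'([b_n,b_{n-1}])$ homeomorphically onto $c([s,1])$; the same argument now gives $\diam_\OO(\pi^{-1}(c'([b_n,b_{n-1}])))\le NC\delta/\lambda^{n+\ell-1}\to 0$, and the Cauchy argument produces a unique accumulation point of $\pi^{-1}(c'(0,1])$. I expect the main obstacle to be the covering step: ensuring that the connected lifted pieces $E_n$, not merely orbifold balls covering $E_1$, inherit the Shrinking Lemma's exponential bound. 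This is precisely where $c(0,1]\cap\post(f)=\emptyset$ (placing everything inside $\OO$) and Lemma~\ref{lem:comp_F_small} (keeping the cover size $N$ finite) combine.
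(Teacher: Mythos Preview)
Your overall strategy---chop $c(0,1]$ into fundamental pieces, show the lifts $E_n$ shrink, run a Cauchy argument---matches the paper, but the covering step you flag as ``the main obstacle'' does contain a genuine gap. Lemma~\ref{lem:shrink} requires each $K_i$ to intersect $\mathcal{J}_R$, and this cannot in general be arranged: if $E_1$ contains a Fatou closure $\overline{\Omega}$ of orbifold diameter larger than $\delta$ (nothing forbids this), any cover of $\overline{\Omega}$ by connected pieces of diameter at most $\delta$ must include pieces lying in the interior of $\Omega$, and those miss $\mathcal{J}_R$ entirely. Your invocation of Lemma~\ref{lem:comp_F_small} does not help here---it bounds the \emph{number} of large Fatou closures, not their size, and finiteness of the cover is already immediate from compactness of $E_1$ in $\OO$. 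The chain bound $\diam_\OO(E_n)\le NC\delta/\lambda^{n-1}$ also tacitly assumes that $E_n$ meets at most one component of each $R^{-(n-1)}(K_i)$, which amounts to injectivity of $R^{n-1}|_{E_n}$; this is true (since $E_1\cap\post(R)=\emptyset$), but it needs to be said.

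The paper bypasses both issues with a different device. Rather than covering, it embeds the base piece $\kappa_0$ (your $E_1$) in a \emph{single} simply connected open set $D_0$ disjoint from $\post(R)$; the existence of such $D_0$ is the non-obvious input, relying on contractibility of $\kappa_0$ as a monotone preimage of an arc. Univalent inverse branches $R_{-n}\colon D_0\to\OO$ are then well-defined, and since $D_0$ meets $\mathcal{J}_R$, Montel's theorem forces them to constant limits; so some $\kappa_{n_0}$ already satisfies $\diam_\OO\kappa_{n_0}\le\delta$, and Lemma~\ref{lem:shrink} is applied once, to the connected set $\kappa_{n_0}$ itself (which does meet $\mathcal{J}_R$), with no chain argument needed. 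For the second statement the paper is also more economical than re-running the whole machine: the cluster set of $\pi^{-1}(c'(0,1])$ is a connected subset of the Jordan curve bounding $\pi^{-1}(c'(0))$, and $R^\ell$ sends it into the single point already found; openness of $R$ then forces it to be a singleton.
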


\begin{proof}


Using Lemma~\ref{lma:shrinknbhd} and considering an iterate of
$f$ if
necessary, we may find a neighborhood $V\subset S^2$ with the
properties that $x$ is the only
possible postcritical point in $V$ and
that $W=f^{-1}(V)\cap V$ is compactly contained in $V$. Pick $y = c(t) \in W\setminus F$ with $t>0$, such that $c[0,t]\subset W$
and denote by $c_0$ the closure of the connected component of $c\setminus \{y,f(y)\}$ bounded by $\{y,f(y)\}$. Define inductively
$(c_n)_{n\ge 0}$ so that $c_{n+1}$ is the lift of $c_n$ by $f$, where $c_n\cap c_{n+1}$ is the singleton $\{y_n\}$ such that
$f^{n}(y_n)=y$.
It follows that $\cup_{n\ge 1} c_n = c(0,t]$.
Let $\kappa_n=\pi^{-1}(c_n)$, for each $n\ge 0$.  Since $\pi$ is monotone, $\kappa_n$ is connected and $\kappa_{n+1}\cap\kappa_n$
is a singleton which is eventually mapped by $R^n$ to
$\pi^{-1}(y)$. Note that $\cup_n\kappa_n=  \pi^{-1}(c(0,t])$ is
connected, again by the monotonicity of $\pi$. 

It follows from \cite[Lemma 17.3]{daverman:book} that $\kappa_0$ is contractible, so that we may include it into a simply connected open
set $D_0\subset W$ avoiding $\post(R)$. For each $n\ge 1$, we may define an inverse branch $R_{-n}:D_0\to \mathcal{O}$ so that $R_{-n}(\kappa_0)=\kappa_n$.
Since $D_0$ intersects $\mathcal{J}_R$, the sequence $\{R_{-n}\}_n$  is a normal family and any limit is constant. Therefore, we may find
$n_0$ such that $\diam_{\mathcal{O}} \kappa_{n_0}\le \de$. 
By the shrinking lemma,
we have $\diam_{\OO} \kappa_{n}\lesssim \diam_{\OO}\kappa_0/\la^n$ for all $n\ge 1$. This implies that
$\diam_\OO\cup_{n\ge k}\kappa_n \lesssim \la^{-k}$. Hence  $(\kappa_n)_n$
converges to a single point $z\in \pi^{-1}(\{x\})$. Since
$R(\kappa_n) = \kappa_{n-1}$ converges to $z$ as well, it
follows that $z$ is a fixed point of $R$.

\smallskip
To see the second statement we note that the set
$\pi^{-1}(c'(0,1])\subset \cbar$  is connected and its cluster
set is connected as well as contained in
$\pi^{-1}(c'(0))$. Since $R$ is open and
\begin{equation*}
  R^\ell(\overline{\pi^{-1}(c'(0,1])})
  \subset
  \overline{\pi^{-1}(c(0,1])},
\end{equation*}
it follows that $\pi^{-1}(c'(0,1])$  has a single accumulation
point.
\end{proof}

\begin{corollary}
  \label{cor:xinF_lifts}
  Let $\CC\subset S^2$ be an $f$-invariant Jordan curve, $x\in F\cap\CC$, and
  $c\colon [0,1] \to \CC$ be a Jordan arc based at
  $x$, meaning that $c(0)=x$. Then $\pi^{-1}(c(0,1])\subset
  \cbar$ accumulates at a single point.
\end{corollary}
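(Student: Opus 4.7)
The plan is to reduce to Lemma~\ref{lma:invgerm} by realizing a short initial segment of $c$ as a lift, under an iterate of $f$, of an $f^p$-invariant Jordan arc based at a periodic forward image of $x$.

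First, since the conclusion depends only on the germ of $c$ at $x$, we may shrink $c$ freely. Because $x\in F$ and every forward orbit in $F$ eventually enters a periodic cycle of Fatou components meeting $\post_{\mathcal{F}}(f)$, there is a minimal $N\ge 0$ with $y:=f^N(x)\in\post_{\mathcal{F}}(f)$; let $p\ge 1$ denote the period of $y$ under $f$. Then $x$ is an $N$-vertex of $(f,\CC)$, and after shrinking we may assume $c\subset e$ for a single $N$-edge $e$ of $\CC$ incident to $x$, so that by~\eqref{eq:fnXntoXhomeo}, $f^N|_e$ is a homeomorphism onto a $0$-edge $E$ of $\CC$ incident to $y$.

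Second, we build an $f^p$-invariant arc along $E$ at $y$. Since $\CC$ is $f$-invariant, the $0$-edge $E$ decomposes as a concatenation of $p$-edges, and there is a unique $p$-edge $\tilde e\subset E$ incident to $y$. The local normal form $f^p\sim z\mapsto z^{\deg(f^p,y)}$ at $y$ (Lemma~\ref{lem:pre_1post}) then forces $f^p(\tilde e)=E$. Parametrising $d:=\tilde e$ with $d(0)=y$, the pullback $(f^p|_{\tilde e})^{-1}(\tilde e)$ is a proper sub-arc $d([0,s])\subsetneq d$ satisfying $f^p(d([0,s]))=d([0,1])$, while $d(0,1]\cap\post(f)=\emptyset$ since $\tilde e$ is a strict sub-arc of the $0$-edge $E$. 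Lemma~\ref{lma:invgerm} applied to the expanding Thurston map $f^p$ and the arc $d$ then shows that $\pi^{-1}(d(0,1])$ accumulates at a single point $z$, a fixed point of $R^p$.

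Finally, we lift back via $f^N$. After shrinking $c$ once more so that $f^N(c)\subset d$, the identity $c=(f^N|_e)^{-1}(f^N(c))$ exhibits $c$ as the $f^N$-lift of $f^N(c)\subset d$. Let $C:=\overline{\pi^{-1}(c(0,1])}\cap\pi^{-1}(x)$; this is a nonempty, closed, connected subset of $\overline{\Omega}=\pi^{-1}(x)$, realized as a nested intersection of closed connected sets thanks to the monotonicity of $\pi$. The semi-conjugacy $\pi\circ R^N=f^N\circ\pi$ yields $R^N(C)\subset\{z\}$, so $C\subset R^{-N}(z)\cap\overline{\Omega}$, which is finite because $R^N|_{\overline{\Omega}}$ has finite degree; a connected, finite, nonempty set is a singleton. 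The main delicate point will be the verification $f^p(\tilde e)=E$, as opposed to $\tilde e$ mapping to the other $0$-edge at $y$; this rests on the containment $\tilde e\subset E$ together with the local $z\mapsto z^m$ normal form at $y$.
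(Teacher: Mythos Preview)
Your overall strategy matches the paper's: push $c$ forward to a periodic point, find an invariant arc germ there, apply Lemma~\ref{lma:invgerm}, and then pull the conclusion back. Your step~5 is correct and is essentially a reproof of the second half of Lemma~\ref{lma:invgerm}, which the paper simply quotes.

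The genuine gap is in step~2, at exactly the point you flag as delicate: the assertion $f^p(\tilde e)=E$ does \emph{not} follow from Lemma~\ref{lem:pre_1post}. That lemma supplies \emph{two} charts $\varphi,\psi$ with $\psi\circ f^p\circ\varphi^{-1}(z)=z^m$, and nothing forces $\varphi=\psi$. Concretely, it is perfectly consistent with $f^p$-invariance of $\CC$ that, in some single chart centred at $y$, the map $f^p$ acts like $z\mapsto -z^2$ with $\CC$ the real axis and $E$ the positive half-axis. Then the $p$-edge $\tilde e\subset E$ incident to $y$ lies along the positive half-axis, while $f^p(\tilde e)$ lies along the negative half-axis, i.e.\ $f^p(\tilde e)=E'$. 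In that case your arc $d=\tilde e$ does not satisfy the invariance hypothesis of Lemma~\ref{lma:invgerm}, and the argument stalls. The containment $\tilde e\subset E$ only tells you the two arcs share a germ at $y$; it says nothing about where $f^p$ sends that germ.

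The fix is to allow one more pass to periodicity before fixing the arc. The forward orbit under $f$ of the germ of $c$ at $x$ lives in the finite set of germs of $\CC$ at the finitely many points of the forward $f$-orbit of $x$, hence is eventually periodic: there exist $n\ge 0$ and $k\ge 1$ with $f^k$ fixing the germ of $f^n\circ c$. With this $(n,k)$ in place of your $(N,p)$, your construction in step~2 goes through verbatim and Lemma~\ref{lma:invgerm} applies. This is precisely how the paper proceeds: it asserts ``there are $n\ge 0$ and $k\ge 1$ such that $c_n:=f^n\circ c$ is $f^k$-invariant'' and then invokes both parts of Lemma~\ref{lma:invgerm}.
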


\begin{proof}
  Since the point $x$ is preperiodic, there are $n\geq 0$ and
  $k\geq 1$ such that the arc $c_n:=f^n\circ
  c$ is $f^k$-invariant. From Lemma~\ref{lma:invgerm}  it follows
  that $\pi^{-1}(c_n((0,1]))\subset \cbar$ has a single
  accumulation point. 
  From the second part in this lemma it follows that
  $\pi^{-1}(c((0,1]))\subset \cbar$ has a single accumulation
  point as desired.
\end{proof}

Now consider the two arcs on
$\CC$ incident to some point $x\in F\cap \CC$. By the above,
their preimages by
$\pi$ accumulate both at a single point in the boundary of
$\pi^{-1}(x)$, which is the closure of a Fatou component. In
order to construct the desired Jordan curve 
$\Gamma$, these two accumulation points need to be
different. This is in fact the case when at each
Fatou-type postcritical $p$ the sectors between the $0$-edges
incident to $p$ are subdivided by the $1$-edges, see
Section~\ref{sec:invar-jord-curv} for the terminology. This is
the reason we introduced this notion and proved
Theorem~\ref{thm:invC_subdivSector}.

\begin{lemma}
  \label{lma:liftg}
  Let $\CC\subset S^2$ be an $f$-invariant Jordan curve with $\post(f) \subset
  \CC$ such that the
  sectors between $0$-edges of every Fatou-type postcritical
  point is subdivided by $1$-edges. Then for any $x\in F\cap
  \CC$ the set $\pi^{-1}(\CC\setminus
  \{x\})$ accumulates in $\pi^{-1}(x)$ in two distinct points.
\end{lemma}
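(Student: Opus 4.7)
The plan is to derive a contradiction from the assumption that $z := z_+(x) = z_-(x)$, by counting local branches of $\pi^{-1}(\CC)$ at $z$ and using that iterates of $R$ are locally injective at $z$. The non-criticality is essential: $z \in \partial \Omega_x$ (where $\Omega_x$ is the Fatou component with $\pi(\overline{\Omega_x}) = \{x\}$), the forward $R$-orbit of $\partial \Omega_x$ lands in boundaries of Fatou components, and such boundaries contain no critical points of $R$ by the remark in Section~\ref{sec:sierp-carp-julia}. Hence every iterate $R^j$ is a local homeomorphism at $z$.

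I would first treat the case $x = p \in \post_\mathcal{F}(f)$, periodic under $f$ of period $m$. Suppose $z_+(p) = z_-(p) = z$. Then $R^m(z) = z$ (since $R^m$ permutes the $\pi$-lifts of the two local arcs of $\CC$ at $p$), and $R^m$ maps a small neighborhood $U_z$ of $z$ homeomorphically onto $V_z := R^m(U_z)$. In $V_z$, the set $\pi^{-1}(\CC)$ consists of an arc of $\partial\Omega_p$ through $z$ together with two arcs $\alpha_+,\alpha_-$, the lifts of the two local arcs of $\CC$ at $p$, both emanating from $z$ into $\cbar \setminus \overline{\Omega_p}$. Thus exactly two arcs emanate from $z$ in $\pi^{-1}(\CC) \cap V_z$ outside $\overline{\Omega_p}$. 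The homeomorphism $R^m$ sends $\pi^{-1}((f^m)^{-1}(\CC)) \cap U_z = R^{-m}(\pi^{-1}(\CC)) \cap U_z$ bijectively onto $\pi^{-1}(\CC) \cap V_z$, forcing this set also to have exactly two arcs emanating from $z$ outside $\overline{\Omega_p}$.

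The contradiction comes from producing a third such arc using the subdivision hypothesis. The key geometric observation is that under the assumption $z_+ = z_-$, one of the two components of $\cbar \setminus \pi^{-1}(\CC)$, which I will call the \emph{middle region}, has its closure intersecting $\partial\Omega_p$ only at the point $z$: locally at $z$, this is the sub-sector of the exterior half-disk between $\alpha_+$ and $\alpha_-$, which is disjoint from $\partial\Omega_p$ except at $z$; this sub-sector extends globally to a single component of $\cbar \setminus \pi^{-1}(\CC)$. Under $\pi$ the middle region corresponds to one of the two local sectors of $S^2 \setminus \CC$ at $p$. Since subdivision for $1$-edges of $(f,\CC)$ refines to subdivision for $m$-edges, there is an $m$-edge $e$ at $p$ lying in that sector. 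Its lift $\pi^{-1}(e) \setminus \overline{\Omega_p}$ lies in the middle region (the interior of $e$ is disjoint from $\CC$) and, by the argument of Corollary~\ref{cor:xinF_lifts}, accumulates at a single point of $\partial\Omega_p$; this point must be $z$, providing a third arc at $z$ and the contradiction.

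For a general $x \in F \cap \CC$, the $f$-orbit of $x$ is eventually periodic and visits a Fatou-type postcritical point $p$; fix $k$ with $f^k(x) = p$. If $z_+(x) = z_-(x) = z$, then $R^k: U_z \to V_z$ is a local homeomorphism sending the two distinct arcs of $\pi^{-1}(\CC)$ emanating at $z$ bijectively to two distinct arcs emanating at $\zeta := R^k(z) \in \partial\Omega_p$ in $\pi^{-1}(\CC) \cap V_z$. But by the previous step $z_+(p) \neq z_-(p)$, so at most one of $\alpha_\pm$ attaches to $\partial\Omega_p$ at $\zeta$, giving at most one arc at $\zeta$ outside $\overline{\Omega_p}$ --- a contradiction. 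The main obstacle throughout is establishing the geometric claim that the middle region meets $\partial\Omega_p$ only at $z$, which requires controlling the global topology of $\pi^{-1}(\CC)$ near $\overline{\Omega_p}$ from its local structure at the putative coincidence point.
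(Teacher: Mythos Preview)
Your proposal is correct and follows essentially the same strategy as the paper's proof: assume the two accumulation points coincide at $z$, use the subdivision hypothesis to force the lift of an extra edge to land at $z$ as well (via the separation argument that the ``middle region'' meets $\partial\Omega_p$ only at $z$), and then contradict local injectivity of an iterate of $R$ at $z$ (non-criticality coming from the carpet property). The differences are cosmetic: you iterate to the first return $R^m$ and count branches at the fixed point $z$, whereas the paper applies $R$ once and uses pigeonhole among the two $0$-edges at $q=f(p)$; for general $x\in F\cap\CC$ you transport the contradiction from the periodic case via the local homeomorphism $R^k$, whereas the paper simply reruns the periodic argument at $x$ using $(k+1)$-edges. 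Both routes work. One small point worth making precise in your write-up (and equally implicit in the paper) is why ``two disjoint branches landing in a single germ'' is impossible: the images $R^m(\alpha_\pm\cap U_z)$ and $R^m(\pi^{-1}(\hat e)\cap U_z)$ are each of the form $\pi^{-1}(c_\pm(0,\epsilon''])$, i.e.\ nested initial segments of the same lift, hence they must intersect---which contradicts injectivity of $R^m$ on $U_z$.
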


\begin{proof}
  Assume first that $x=p\in S^2$ is
  a periodic Fatou-type postcritical point of $f$. Let
  $\overline{\Omega}_p:= \pi^{-1}(p)\subset \cbar$. It is the
  closure of a Fatou component $\Omega_p$ of $R$ that contains a
  Fatou-type postcritical point of $R$.

  Let $e,e'\subset \CC$ be the $1$-edges incident to $p$. If
  $K:=\pi^{-1}(e\setminus \{p\})$ and $K':=\pi^{-1}(e'\setminus
  \{p\})$
  accumulate at distinct points in $\partial \Omega_p$ we
  are done. So let us assume from now on that these sets both
  accumulate at the same point $z\in \partial \Omega_p$.

By assumption there is a
$1$-edge $e_0$ between $e$ and $e'$ and a $1$-edge $e_1$ between
$e'$ and $e$. Let $\widehat{e}_0$ and $\widehat{e}_1$ be the
interiors of these $1$-edges, i.e., we have removed the two
endpoints from $e_0$ as well as $e_1$. Then $\widehat{e}_0$ and
$\widehat{e}_1$ are disjoint.
Let $z_0$ and $z_1$ be
the accumulation points of $K_0:=\pi^{-1}(\widehat{e}_0)$ and
$K_1:= \pi^{-1}(\widehat{e}_1)$ in $\partial{\Omega_p}$
respectively.  Lemma~\ref{lem:pre_Jordan} implies that $K_0$ and
$K_1$ are in distinct components of $\pi^{-1}(S^2\setminus\CC)$.
Note that
since $K$ and $K'$ accumulate at the same point of
$z\in \overline{\Omega}_p$, it follows that the closure of
$\pi^{-1}(\CC\setminus\{x\})$ separates $\Omega_p$ from $K_0$ or
$K_1$. Without loss of generality, we may assume that it is
$K_0$. This means that $z=z_0$.

Note that $f(e), f(e'), f(e_0)$ are $0$-edges incident to
$q=f(p)$. Since there are only two distinct such $0$-edges, it
follows that at least  two of them have to coincide;  let us assume $f(e)=f(e')$.
Hence, $\pi^{-1}(f(e)\setminus \{q\})=\pi^{-1} (f(e')\setminus\{q\})$ holds as well.
But since  $f\circ \pi =  \pi\circ R$, it follows that $R(K)=R(K')$. But this
contradicts the fact that $R$ is one-to-one in a neighborhood of $z$.
This contradiction shows that  $z\neq z'$ as desired.

  \smallskip
  Now let $x\in F$ be arbitrary. Then there is an $k\in \N$ such
  that $p:=f^k(x)$ is a periodic postcritical point. The two
  $(k+1)$-edges $e,e'\subset \CC$ incident to $x$ are mapped to
  (possibly the same)
  $1$-edges in $\CC$ incident to $p$. Thus, the $(k+1)$-edges
  incident to
  $x$ divide the sectors between $e$ and $e'$. The argument now
  proceeds exactly as above.

\end{proof}

We are now ready to prove Theorem~\ref{thm:lifts}.

\begin{proof}[Proof of Theorem~\ref{thm:lifts}]
Let $\CC\subset S^2$ be an $f$-invariant Jordan curve with
$\post(f) \subset \CC$, such that
at each Fatou-type postcritical point the sectors between
$0$-edges are subdivided by the $1$-edges as in
Section~\ref{sec:invar-jord-curv}.
Let $K:= \pi^{-1}(\CC)\subset \cbar$ and $L\subset \cbar$ be the
closure of $\pi^{-1}(\CC\setminus F)$. Note that for any Fatou
component $\Omega\subset K$, the set $L\cap \overline{\Omega}=
L\cap \partial \Omega$ consists of exactly two distinct points
by Lemma~\ref{lma:liftg}.

We let $\G\subset\cbar$ be the union of $L$ and the set of
internal rays which join a point of $L\cap\partial\Omega$ to
the center of $\Omega$ for each Fatou component $\Omega\subset
K$. Since the Fatou set $\mathcal{F}_R$ is invariant for $R$, it
follows that $F\cap \CC$ is forward invariant by $f$. From
$f\circ \pi = \pi\circ R$ it
follows that $L$ is forward invariant for $R$. Hence any
internal ray
in $\Gamma$ is mapped to another internal ray in $\Gamma$. Thus
$\Gamma$ is (forward) $R$-invariant, meaning that 
$R(\Gamma)\subset \Gamma$.

Note that $\pi(\post(R)) = \post(f)$ (see
\cite[Theorem~13.6]{bonk:meyer:expanding}). Thus $\post(R)
\subset \pi^{-1}(\post(f))$.
So any $q\in
\post(R)$ is contained in the set $\pi^{-1}(p)$ for some $p\in
\post(f)$. Note that $\pi^{-1}(p)$
 is either $\overline{\Omega}_q
\subset K$, the closure of the Fatou
component of $R$ whose center is $q$, or $\pi^{-1}(p) =
\{q\}\subset L$. In
any case $q\in \Gamma$ by construction, meaning that $\post(R)
\subset \Gamma$. Finally, the fact that $\pi(\Gamma) = \CC$ is
obvious from the construction as well.

\begin{claim*}
  $\G$ is a Jordan curve.
\end{claim*}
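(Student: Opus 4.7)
The plan is to construct a continuous bijection $\widetilde{c}\colon \widetilde{S}^1\to \Gamma$ from a topological circle onto $\Gamma$. Fix a homeomorphism $c\colon S^1\to \CC$, and set $T:=c^{-1}(F\cap \CC)$; this set is at most countable since there are only countably many Fatou components. For each $t\in T$, denote by $\Omega_t$ the Fatou component with $\pi(\overline{\Omega}_t)=c(t)$, by $q_t$ its center, and by $z_-(t),z_+(t)\in\partial\Omega_t$ the two distinct accumulation points in $\partial\Omega_t$ of the preimages under $\pi$ of the two arc germs of $\CC$ at $c(t)$; these are provided by Corollary~\ref{cor:xinF_lifts} and Lemma~\ref{lma:liftg}, and we label them so that $z_-(t)$ corresponds to approaching $t$ from the left and $z_+(t)$ from the right. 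Now form $\widetilde{S}^1$ by blowing up each $t\in T$ in $S^1$ into a closed arc $I_t=[t^-,t^+]$ of summable length, obtaining a space still homeomorphic to $S^1$; and define $\widetilde{c}(s):=\pi^{-1}(c(s))$ for $s\notin \bigcup_{t\in T}I_t$ (this preimage is then a single point), while on each $I_t$ we parametrize the concatenation of the closed internal rays from $q_t$ to $z_-(t)$ and from $q_t$ to $z_+(t)$, with $\widetilde{c}(t^\pm)=z_\pm(t)$.

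By construction the image of $\widetilde{c}$ is exactly $\Gamma$. Injectivity is then essentially automatic: distinct Fatou components have disjoint closures in a carpet Julia set, so the images of distinct $I_t$'s are disjoint; within a single $\overline{\Omega}_t$ the two internal rays form a Jordan arc meeting $\partial\Omega_t$ only at $z_\pm(t)$; and the restriction of $\widetilde{c}$ to $\widetilde{S}^1\setminus \bigcup_t I_t$ is injective because $c$ is, and because $\pi$ is injective outside $\bigcup_i \overline{\Omega}_i$, where these preimages lie.

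The main obstacle will be verifying continuity of $\widetilde{c}$. Continuity is straightforward at points away from $\bigcup_t \overline{I}_t$ and at interior points of each $I_t$. At the endpoints $t^\pm$ of an \emph{isolated} $I_t$, the equality $\widetilde{c}(t^\pm)=z_\pm(t)$ matches exactly the accumulation statement of Lemma~\ref{lma:liftg}. The delicate case is continuity at a parameter $s_\infty\in \widetilde{S}^1$ that is a limit of infinitely many $I_{t_n}$ with $t_n\in T$. Here the argument will run as follows: since the $\Omega_{t_n}$ are pairwise distinct Fatou components, Lemma~\ref{lem:comp_F_small} yields $\diam(\overline{\Omega}_{t_n})\to 0$, and since $\widetilde{c}(I_{t_n})\subset \overline{\Omega}_{t_n}$, the sets $\widetilde{c}(I_{t_n})$ collapse to a single limit point $p_\infty$. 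By continuity of $\pi$ and $c$ one has $\pi(p_\infty)=c(s_\infty)$, so $p_\infty\in \pi^{-1}(c(s_\infty))$; if $s_\infty\notin T$ this preimage is a singleton equal to $\widetilde{c}(s_\infty)$, and if $s_\infty\in T$, then $p_\infty\in \partial\Omega_{s_\infty}$ must equal $z_\pm(s_\infty)$ (with the sign determined by the side of approach) by the characterization of accumulation points of $\pi^{-1}(\CC\setminus\{c(s_\infty)\})$ on $\partial\Omega_{s_\infty}$ from Lemma~\ref{lma:liftg}. Once continuity is established, $\widetilde{c}$ is a continuous bijection from the compact space $\widetilde{S}^1$ into the Hausdorff space $\cbar$, hence a homeomorphism onto its image $\Gamma$; since $\widetilde{S}^1\cong S^1$, we conclude that $\Gamma$ is a Jordan curve.
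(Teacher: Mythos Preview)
Your argument is correct and yields an explicit parametrization of $\Gamma$; the paper takes a quite different route. Instead of building a map from $S^1$, the paper first constructs a monotone retraction $\psi\colon K\to\Gamma$ (where $K=\pi^{-1}(\CC)$), sending each $\overline{\Omega}\subset K$ onto its crosscut $\overline{\Omega}\cap\Gamma$ and fixing $L$; continuity of $\psi$ follows from the $E$-continuum property (Lemma~\ref{lem:comp_F_small}), so $\Gamma=\psi(K)$ is a continuum. It then invokes Whyburn's characterization of simple closed curves as continua in which every pair of distinct points separates, and checks this by pushing the separation of $\CC$ by $\{\pi(z),\pi(w)\}$ through the monotone maps $\pi$ and $\psi$. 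Your approach is more constructive and avoids the Whyburn criterion, at the cost of a more delicate case analysis for continuity; the paper's is slicker once one knows that characterization. Both rest on the same two ingredients: Corollary~\ref{cor:xinF_lifts}/Lemma~\ref{lma:liftg} for the accumulation points, and the $E$-continuum property for controlling the small Fatou components. One minor expository point: your phrase ``collapse to a single limit point $p_\infty$'' overstates what shrinking diameters alone give---what you actually (and correctly) prove in the subsequent sentences is that every subsequential limit $p_\infty$ lies in $\pi^{-1}(c(s_\infty))$ and is then pinned down uniquely via Corollary~\ref{cor:xinF_lifts}.
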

First, we define a retraction map $\psi:K\to \G$ as follows. If
$z\in L$, then set $\psi(z)=z$.
For any Fatou component $\Omega\subset K$ there is monotone
retraction $\rho:\overline{\Omega}\to \overline{\Omega}\cap\G$
mapping the closed Jordan domain to the crosscut defined by
$\G\cap\overline{\Omega}$; we let $\psi=\rho$ there.

We claim that $\psi$ is a retraction. The continuity follows from the fact that $\mathcal{J}_R$ is an $E$-continuum (cf.\ p.\,\pageref{Econt}).
Moreover, $\psi$ is a retraction since it is the case on the
closure of each Fatou component contained in $K$ and is the
identity elsewhere. Therefore, $\Gamma$ is continuum.

According to \cite[Theorem III.7]{whyburn:analytic_topology}, it
is sufficient to prove that any pair of distinct points of $\G$
separates 
$\G$.  Let us  consider two points  $\{z,w\}\subset \G$ and set $x=\pi(z)$ and $y=\pi(w)$.

If $x=y$, then $z,w$ are in the closure of the same Fatou
component $\Omega\subset K$, so it follows easily that
$\G\setminus\{z,w\}$ is disconnected.  Let us now assume that
$x\ne y$. Since $\CC$ is a Jordan curve, $\{x,y\}$ separates
$\CC$, hence, since $\pi$ is continuous,
$\{\pi^{-1}(x),\pi^{-1}(y)\}$ separates $K$ and, since $\psi$ is
monotone, $\{\psi(\pi^{-1}(x)),\psi(\pi^{-1}(y))\}$ separates
$\G$.  Now, $I_x=\psi(\pi^{-1}(x))$ and $I_y=\psi(\pi^{-1}(y))$
are intervals which contain $z$ and $w$ respectively. If these
are non
degenerate, we may define retractions
$\rho_z: I_x\setminus \{z\}\to \partial I _x\setminus\{z\}$ and
$\rho_w:I_y \setminus \{w\}\to \partial I_y\setminus\{w\}$,
yielding a monotone retraction
$\rho:\G\setminus \{z,w\}\to \G\setminus (I_x\cup I_y)$ by
letting $\rho=\id$ on the complement. This enables us to
conclude that
$\{z,w\}$ separates  $\G$. Thus $\Gamma$ is a Jordan curve.

\end{proof}

We are ready to finish the proof of our main theorem.

\begin{proof}[Proof of Theorem \ref{thm:main}]
Let $R\colon \cbar \to \cbar$ be a postcritically finite
rational map whose Julia set is homeomorphic to the
Sierpi\'{n}ski carpet. Let $f\colon S^2\to S^2$ be the expanding
Thurston map as given by Theorem~\ref{thm:quotient}. From
Theorem~\ref{thm:invC_subdivSector} it follows that for each sufficiently high iterate $f^n$ there is an
$f^n$-invariant Jordan curve $\CC\subset S^2$ with
$\post(f)\subset \CC$, that has the additional property that
for any Fatou-type postcritical point $p$ the sectors between
the $0$-edges incident to $p$ are subdivided by the $n$-edges.
Note that $\post(f^n) = \post(f)$ and $\post(R^n) = \post(R)$.
Applying Theorem~\ref{thm:lifts} to $f^n$ and to this curve $\CC$
finishes the proof.
\end{proof}

\begin{rem}
  \label{rem:Jordan_with_hair}
  In the case when $\CC \subset S^2$ with $\post(f) \subset \CC$
  is $f$-invariant, but does
  not necessarily divide sectors at Fatou-type postcritical
  points, the construction given in the proof of
  Theorem~\ref{thm:lifts} still yields an $R$-invariant set
  $\Gamma\subset \cbar$ with $\post(R)\subset \Gamma$. The set
  $\Gamma$ however will not be a Jordan curve in general, but a
  Jordan curve together with countably many internal rays
  connecting the center of Fatou components to this Jordan
  curve. In fact, we may remove all but finitely many of these
  internal rays and still have an $R$-invariant set
  $\Gamma\subset \cbar$ with $\post(R) \subset \Gamma$.
\end{rem}



\section{Acknowledgments}
\label{sec:acknoledgements}

The authors thank Tan Lei, who did initiate this collaboration. 
We are 
grateful to Mario Bonk, Mikhail Hlushchanka, and Kevin Pilgrim for their comments on a preliminary
version.  Y.G and JS.Z thank Professor Cui for helpful suggestions and discussions.
P.H. is partially supported by the  ANR projects ``GDSous/GSG'' no. 12-BS01-0003-01
and ``Lambda'' no. 13-BS01-0002. D.M. has been partially
supported by the Academy of Finland via the Centre of Excellence
in Analysis and Dynamics Research (project No. 271983), as well
as by the Deutsche Forschungsgemeinschaft (DFG-ME 4188/1-1).
P.H and D.M. thank IPAM for their hospitality where part of this work
has been done during the program ``Interaction between Analysis and Geometry''.

\end{document}